\tikzset{
    vertex/.style = {
        circle,
        draw,
        outer sep = 3pt,
        inner sep = 3pt,
    },edge/.style = {->,> = latex'}
}
\def\diag{\mathop{\rm diag}}
\def\rank{\mathop{\rm rank}}
\newcommand{\rr}{\mathbb{R}}
\newcommand{\1}{\mathbf{1}}
\newcommand{\x}{\widehat{x}}
\newcommand{\q}{\widehat{q}}
\newcommand{\re}{\operatorname{Re}}
\newcommand{\0}{\mathbf{0}}
\newcommand{\la}{\lambda}
\def\tr{{\rm trace}}
\def\det{{\rm det}}
\def\cir{{\rm Circ}}
\def\L{\widetilde{L}}
\def\d{\widetilde{d}}
\def\D{\widetilde{D}}
\newtheorem{theorem}{Theorem}
\newtheorem{lemma}{Lemma}
\newcommand{\al}{\alpha}
\begin{document}
\begin{center}
\large{
Inverse formula for distance matrices of gear graphs \\}
R. Balaji and Vinayak Gupta 
\end{center}
\begin{center}
\today
\end{center}
\begin{abstract}
Distance matrices of some star like graphs are investigated in \cite{JAK}.
These graphs are trees which are stars, wheel graphs, helm graphs and
gear graphs. 
Except for gear graphs in the above list of star like graphs, there are precise formulas available in the literature to compute the inverse/Moore-Penrose inverse of their
distance matrices. These formulas tell that if $D$ is the distance matrix of $G$, then
\[D^{\dag}=-\frac{1}{2}L+uu', \] 
where $L$ is a Laplacian-like matrix which is positive semidefinite and
all row sums equal to zero. The matrix $L$ and the vector $u$ depend
only on the degree and number of vertices in $G$ and hence, can be written directly from $G$. The earliest formula obtained is for distance matrices of trees 
in Graham and Lov\'{a}sz \cite{GL}.
In this paper, we obtain an elegant formula of this kind to compute the 
Moore-Penrose inverse of the distance matrix of a gear graph.
\end{abstract}
{\bf Key words.} Distance matrices, Euclidean distance matrices, gear graphs, Moore-Penrose inverse.

{\bf AMS CLASSIFICATION.} 05C50

\section{Introduction}
All graphs in this paper will be simple.
Let $G$ be a connected graph with $m$ vertices labelled $\{1,\dotsc,m\}$.
The distance between any two vertices $u$ and $v$, denoted by $d_{uv}$, is the
length of the shortest path in $G$ from $u$ to $v$. 
The distance matrix of $G$, denoted by $D(G)$, is then the
$m \times m$ matrix with $(i,j)^{\rm th}$ entry equal to 
\[
\begin{cases}
0 &  {i= j}\\
d_{ij} & {\mbox{else}} .   \\
\end{cases}
\]
The theory of distance matrices of graphs begins with an interesting result of Graham and Pollak \cite{GRA} saying that
if $G$ is a tree on $m$ vertices, then  
\[\det(D(G))=(-1)^{m-1}(m-1)2^{m-2}.\] 
Hence, if $G_1$ and $G_2$ are any two trees with same number of vertices, then
$\det(D(G_1))$ and $\det(D(G_2))$ are equal. Motivated by an application in a data communication problem, 
Graham and Lov\'{a}sz \cite{GL} investigated distance matrices of trees 
more extensively where the following remarkable inverse formula is obtained.
Let $T$ be a tree on $m$ vertices and let $D:=D(T)$. Then, 
\begin{equation} \label{grl}
D^{-1}=-\frac{1}{2}S+\frac{1}{2(n-1)} \tau \tau',
\end{equation}
where $\tau=(2-\delta_1,\dotsc,2-\delta_{m})'$ and $S$ is the Laplacian matrix of $T$.
(Here $\delta_i$ is the degree of vertex $i$.) By the multilinearlity property of the
determinant, the formula for $\det(D(G))$ can be deduced from (\ref{grl}). Recall that the Laplacian matrix of a connected graph is
$\Delta-A$, where $\Delta:=\diag(\delta_1,\dotsc,\delta_m)$ and $A$ is the adjacency matrix. Thus, the inverse of $D$ just becomes a direct computation from  the degree sequence of the tree.
Formula (\ref{grl}) has an extension to weighted trees; \cite{KIRK}. Suppose to each edge 
$(i,j)$ of $T$, a positive number $w_{ij}$ is assigned. 
The scalars $w_{ij}$ will be called the weights of $T$. 
The distance $\d_{ij}$ in this set up
is the weighted distance which is the sum of all the weights in the path
connecting $i$ and $j$. Define $\D:=[\d_{ij}]$. 
The formula in \cite{KIRK} says that
\begin{equation} \label{grlw}
{\D}^{-1}=-\frac{1}{2}\widetilde{L}+ \frac{1}{2\sum \limits_{i,j}w_{ij}} \tau \tau',
\end{equation}
where the weighted Laplacian $\widetilde{L}=[\xi_{ij}]$ is defined as follows:
\[\xi_{ij}:=
\begin{cases}
\sum \limits_{i  \sim  k} w_{ik}^{-1} &  {i= j}\\
-w_{ij}^{-1} & {i \sim  j}    \\
0   &           \mbox{else}.
\end{cases}
\]
Clearly, $\widetilde{L}$ depends only on the weights. Again, the vector $\tau$ is given by (\ref{grl}).
Specializing $w_{ij}=1$ for all $i$ and $j$, formula (\ref{grlw}) reduces to (\ref{grl}).
In both the weighted and unweighted cases, the following identity that connects the Laplacian and
distance matrices plays a crucial role in deducing $(\ref{grl})$ and $(\ref{grlw})$:
\[d_{ij}=l^{\dag}_{ii} + l^{\dag}_{jj}-2 l^\dag_{ij}~~\mbox{and}~~\d_{ij}=\xi_{ii}^{\dag}+\xi_{jj}^{\dag}-2 \xi_{ij}^{\dag}, \]
where $l^{\dag}_{ij}$ and $\xi_{ij}^{\dag}$ are the $(i,j)^{\rm th}$-entries of the Moore-Penrose inverse of $L$ and $\L$ respectively.
Both $L$ and $\L$ are positive semidefinite matrices. This is easy to see. 
So, distance matrices of trees are Euclidean distance matrices.
We say that an $m \times m $ matrix $D=(d_{ij})$ is called a Euclidean distance matrix (EDM) if there exist $p_1, \dotsc, p_m$ in some Euclidean space such that 
\[ d_{ij}=\|p^i-p^j\|^2  ~~i,j=1,\dotsc,m.    \]   
Equivalently, $D$ is a EDM if and only if there exists a positive semidefinite
matrix $F=[f_{ij}]$ such that
\[d_{ij}=f_{ii}+f_{jj}-2f_{ij}. \]

A wide literature on EDMs appear in \cite{ALF}. To compute the Moore-Penrose inverse 
of a EDM, the following formula is deduced in Balaji and Bapat \cite{RB}: If 
$D$ is a EDM and 
$\1'D^{\dag}\1>0$, then
\[D^{\dag}=-\frac{1}{2}U^\dagger+\frac{1}{\1'D^\dag \1  }(D^\dag \1)(D^\dag \1)', \]
where $U:=-\frac{1}{2}PDP$, $P:=I-\frac{J}{m}$, $J:=\1 \1'$ and $\1$ is the vector of all ones in $\rr^m$.
Using this formula, it is easy to get (\ref{grl}) and $(\ref{grlw})$, as distance matrices of trees are EDMs.

A general question is:
Given a connected graph, deduce a formula 
to compute the inverse/Moore-Penrose inverse of its distance matrix.
Unlike distance matrices of trees, 
there are connected graphs whose distance matrices are not  EDMs.
In the recent times, there are some interesting results on connected graphs whose distance matrices are EDMs. 
Suppose $G$ is a connected graph and $D(G)$ is an EDM.
To get an inverse formula for $D(G)$, a special Laplacian needs to be constructed. 
Usually, based on the observations of several numerical computations, this matrix is identified. Then a rank one vector is identified. Thus, as in the right hand side of (\ref{grl}), a matrix  is constructed. Then it is shown that this is the inverse of $D(G)$ by some special techniques.
 Our work in this paper 
follow on these lines.

Jakli\'{c} and Modic \cite{JAK} identified some
star-like graphs which are EDMs. To be precise, a star-like graph is a connected graph
$G$ that has a subgraph $H$ which is a star tree.  
Star-like graphs considered in \cite{JAK} are wheel graphs, helm graphs
and gear graphs.
Spectral properties of the distance matrices of these graphs were obtained 
 in \cite{JAK}.
 The inverse formula in the spirit of Graham and Lov\'{a}sz
for wheel graphs and helm graphs are deduced in \cite{GWE}, \cite{GW} and \cite{GH}.    
In this paper, we derive a formula for the Moore-Penrose inverse of the
distance matrix of a gear graph.
A gear graph is constructed from a wheel graph. Let $W_n$ be a wheel graph  
with $n$ vertices. Then the gear graph is obtained by inserting a new vertex in between any two vertices of the outer cycle of $W_n$. Hence the number of vertices in this gear graph is $2n-1$. So, the distance matrix of this gear graph has order $2n-1$.
Furthermore, as pointed out in Section $5$ of \cite{JAK}, the distance matrix
of a gear graph with $2n-1$ vertices has the form
\begin{equation} \label{ddefg}
\left[
\begin{array}{ccccccc}
0 & \1_{n-1}' & 2 \1_{n-1}' \\
\1_{n-1} & 2 (J_{n-1}-I_{n-1}) & S\\
2 \1_{n-1} & S' & T \\  
\end{array}
\right],
\end{equation}
where $S$ is not symmetric. The structure of the distance matrix of wheel and helm graphs are simpler. In fact, all the square blocks in the distance matrices of wheel and helm graphs are symmetric. Unlike this, we have a square block $S$ in (\ref{ddefg}) which is not symmetric.
In addition, distance matrix of a gear graph of order $2n-1$ has a much lower rank.
The rank is $n$. Due to these facts,   
to get a formula for the Moore-Penrose inverse in the gear case is more complicated than in the earlier cases.
In this paper, we accomplish this task.

\section{Preliminaries}
The following notation will be used in this paper.
\subsection{Notation}
\begin{enumerate}
\item[\rm (i)] All vectors are assumed column vectors. To denote the transpose of an $m \times n$ matrix $A$,
we use $A'$ and for the conjugate transpose, we use $A^*$. As usual, the Moore Penrose inverse of $A$ will be written $A^\dag$. The real part of $A$ will be denoted by
$\re{A}$. The notation $A(r,s)$ will denote the $(r,s)^{\rm th}$-entry of $A$.
 If $v$ is a vector with $m$ components, then $v(r)$ will denote the $r^{\rm th}$ coordinate of $v$.

\item[\rm (ii)] Throughout the paper, we assume that 
$n$ is a positive integer and $n \geq 4$.
The notation $\omega$ will be used for the $(n-1)^{\rm th}$ root of unity:  
 \[\omega:=e^{\frac{2\pi i}{n-1}}.\]
So, $1,\omega,\dotsc,\omega^{n-2}$ are the roots $z^{n-1}=1$.
\item[\rm (iii)] 
The notation $\1_{m}$ will denote the vector of all ones in $\rr^m$.
For the $m \times m$ matrix of all ones, we shall use $J_{m}$.
The notation $I_{m}$ will stand for the
$m \times m$ identity matrix. 
 If $m=2n-1$, we shall simply use
 $\1$, $J$, and $I$.  
We use $\0$ for a column/row vector where all entries are zero. To denote the zero matrix with more than one row/column, we shall use $O$.

\item[\rm (iv)] The inner product between the vectors $x$ and $y$ in a Euclidean/unitary space
will be written 
$\langle x,y \rangle$. 
 
\item[\rm (v)] The $n \times n$ circulant matrix specified by the row vector 
$(a_1,\dotsc,a_n)$ will be denoted by $\cir(a_1,\dotsc,a_n)$. We recall that
\begin{equation*}
\cir(a_1,\dotsc,a_n)= \left[
{\begin{array}{rrrrrr}
a_1 & a_2 & a_3 & \ldots & a_n \\
a_n & a_1 & a_2 & \ldots & a_{n-1} \\
a_{n-1} & a_n & a_1 &  \ldots &a_{n-2} \\
\vdots & \vdots & \vdots &  \ddots &\vdots \\
a_2 & a_3 & a_4 &  \ldots &a_1
\end{array}}
\right].
\end{equation*}

\item[\rm (vi)] Let $W_n$ be the wheel graph with $n$ vertices. 
To each edge in the outer cycle of $W_n$, insert a new vertex. The resulting graph 
is called the gear graph obtained from $W_n$. 
We denote this by $G_n$. 
The number of vertices in $G_n$ is $2n-1$. 
\end{enumerate}

  \begin{figure}
\centering 
\begin{tikzpicture}  
  [scale=1.3,auto=center,every node/.style={circle,fill=black!20}] 
    
  \node (a1) at (0,0) {$g_1$};  
  \node (a2) at (1.5,-2)  {$g_2$};  
  \node (a3) at  (2.5,0)  {$g_3$};  
  \node (a4) at  (1.5,2)  {$g_4$};  
  \node (a5) at (-1.5,2)   {$g_5$};  
  \node (a6) at  (-2.5,0) {$g_6$};  
   \node (a7) at  (-1.5,-2) {$g_7$}; 
   \node (a8) at (2,-1)  {$g_8$}; 
   \node (a9) at (2,1)  {$g_9$}; 
    \node (a10) at (0,2)  {$g_{10}$};
   \node (a11) at (-2,1)  {$g_{11}$}; 
    \node (a12) at (-2,-1)  {$g_{12}$};
    \node (a13) at (0,-2)  {$g_{13}$};

  \draw (a1) -- (a2); 
  \draw (a2) -- (a8);  
  \draw (a2) -- (a13);  
  \draw (a4) -- (a9);  
  \draw (a6) -- (a12);  
  \draw (a3) -- (a8);  
   \draw (a1) -- (a5);
     \draw (a3) -- (a1); 
     \draw (a1) -- (a6);   
      \draw (a4) -- (a10); 
         \draw (a7) -- (a1); 
          \draw (a7) -- (a12); 
          
        \draw (a1) -- (a4);
     \draw (a3) -- (a9); 
     \draw (a11) -- (a6);   
      \draw (a5) -- (a10); 
         \draw (a7) -- (a13); 
          \draw (a5) -- (a11);   
          
\end{tikzpicture}  
 \caption{$G_{13}$ obtained from $W_7$} \label{fig_G13}
\end{figure}
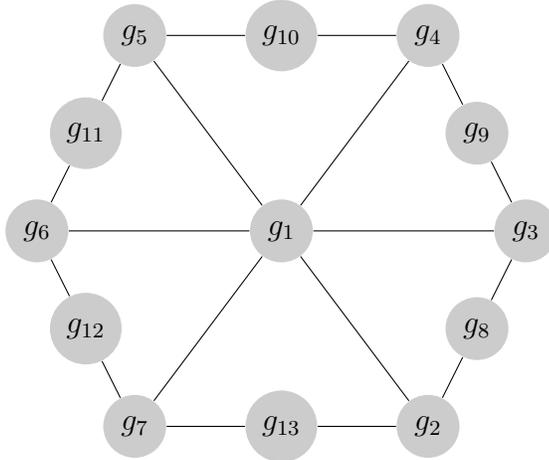

\subsection{Distance matrix of a gear graph}
We fix a labelling for $G_n$. The vertices of the subgraph 
$W_n$ are first labelled as follows.
The center of $W_n$ is labelled $g_1$ and 
the outer cycle of $W_n$ are labelled $g_2,g_3,\dotsc,g_n$ anticlockwise. 
Now, label the vertex in between the edge $(g_i,g_{i+1})$ of $W_n$ by $g_{i+n-1}$.
We now have a labelling of all the $2n-1$ vertices of $G_n$.
To illustrate, $G_{13}$ is given in Figure \ref{fig_G13}.

With this labelling, the distance matrix of the gear graph $G_n$ will be
\begin{equation} \label{dgn}
D(G_n)=\left[
\begin{array}{ccccccccc}
0 & \1_{n-1}' & 2 \1_{n-1}'\\
\1_{n-1} & 2(J_{n-1}-I_{n-1}) & S\\
2 \1_{n-1} & S' & T
\end{array}
\right],
\end{equation}
where $S:=\cir(1,\underbrace{3,\dotsc,3}_{n-3},1)$ and $T:=\cir(0,2,\underbrace {4,\dotsc,4}_{n-4},2)$.
\subsection{Known results}
We use the results that are proved in Section $5$ of \cite{JAK}
extensively. 
\begin{enumerate}
\item[\rm (K1)] The null-space of $D(G_n)$
is computed precisely in $\cite{JAK}$. 
Let $e_i \in \rr^{n-1}$ be the
vector having $1$ in the $i^{\rm th}$ position and zeros elsewhere. 
Define
\begin{equation} \label{null}
{f^{i}}:=
\begin{cases}
\large(1,-(e_{i}'+e_{i+1}'),e_{i}'\large)^{'}&  i=1,2,\dotsc,n-2\\
\large(1,~-(e_{1}'+e_{n}'), ~e_{n-1}' \large)'&  i=n-1.
\end{cases}
\end{equation}
Then,
$\{f^{1},\dotsc,f^{n-1}\}$ is a basis for the null-space of $D(G_n)$. 
We note that 
\begin{equation} \label{1fi=0}
\langle \1,f^{i}\rangle=0~~~i=1,\dotsc,n-1
\end{equation}
and $\rank(D(G_n))=n$.
 
\item[\rm (K2)]
The non-zero eigenvalues of $D(G_n)$ are
given in the proof of Theorem 13 in \cite{JAK}.  
The real numbers
\begin{equation} \label{lambdaj}
\lambda_{j}:=3n-8-(-1)^{j}\sqrt{5}\sqrt{n(2n-9)+12}~~~~j=1,2
\end{equation}
 are eigenvalues of $D(G_n)$.
Eigenvectors corresponding to $\lambda_{j}$ are given by
\begin{equation} \label{xj}
x_{j}=(\alpha_{j},\beta_{j}\1_{n-1}',\1_{n-1}' )^{'},
\end{equation}
where
\[\alpha_{j}:=\frac{15-5n-2(-1)^{j}\sqrt{5}\sqrt{n(2n-9)+12}}{3(n-1)}, \]
\[ \beta_{j}:=\frac{6-n-(-1)^{j}\sqrt{5}\sqrt{n(2n-9)+12}}{3(n-1)}.\]
The other $n-2$ non-zero eigenvalues of $D(G_n)$ are  
\begin{equation} \label{thetak} 
 \theta_{k}:=-8\cos^{2}(\frac{\pi k}{n-1})-2~~~k=1,\dotsc,n-2.
 \end{equation}
The result in Theorem $13$ of \cite{JAK} says that $D(G_n)$ is an Euclidean distance matrix (EDM).
By a well-known result, an $m \times m$ symmetric matrix $G$ is a EDM if it has zero diagonal
and \[x'Gx \leq 0~~\mbox{for all} ~~x \in \1_{m}^{\perp}.\]

\item[\rm (K3)] \label{itm:first} 
The following result is well known on circulant matrices; see \cite{REV}.
The eigenvalues of $\cir( c_0,c_1,\dotsc, c_{n-2})$ 
are 
\[\sigma_ m  = \sum_{p=0}^{n-2} c_{p}\omega^{pm}~~~m=0,\dotsc,n-2.\]
The eigenvectors corresponding to $\sigma_m$ are given by \[v_{m}=(1,\omega^{m},\omega^{2m},\dotsc ,\omega^{m(n-2)})^{'}~~m=0,\dotsc,n-2.\] Furthermore, the vectors $v_0,\dotsc,v_{n-2}$ are mutually orthogonal.

\item[\rm (K4)] The following theorem is obtained in \cite{RB}. Let $D$ be an   $m\times m$ EDM. Suppose $\beta:=\1_{m} D^\dagger \1_{m} > 0$. Let $P=I_{m}-\frac{J_{m}}{m}$, $G=-\frac{1}{2}PDP$ and $u=D^\dagger \1$. Then
\[D^\dagger =-\frac{1}{2}G^\dagger+\frac{1}{\beta}uu'.\]
\end{enumerate}
\section{Main results}
We shall consider distance matrices of $G_n$ for the cases $n$ even and $n$
odd separately. The formulas to compute the Moore-Penrose inverse are different in
both these cases. We first consider even case. 
\subsection{Moore-Penrose inverse of $D(G_{n})$ when $n$ is even}
Consider a gear graph $G_n$, where $n$ is even. Motivated by numerical computations, we define a special Laplacian matrix for $G_n$ now.
\subsection{Special even Laplacian} \label{speciallaplacianeven}
For the gear graph $G_n$ with $n$ even, the special Laplacian matrix will be denoted by $L_{even}$. 
Define \[\phi_k:= \cos(\frac{\pi k}{n-1})~~~k=1,\dotsc,n-2. \]
Since $n-1$ is odd, \[\phi_{k} \neq 0~~\mbox{for all}~k.\]
Define 
\begin{equation} \label{A}
A:=\frac{9(n-1)}{(n+4)^2}
\left[
\begin{array}{ccccccccc}
1 & \frac{(n-2)}{3(n-1)}\1_{n-1}' &\frac{-(n+1)}{3(n-1)}\1_{n-1}' \\
\frac{(n-2)}{3(n-1)}\1_{n-1}& \frac{(n-2)^2}{9(n-1)^2}J_{n-1} &\frac{-(n-2)(n+1)}{9(n-1)^2}J_{n-1} \\
\frac{-(n+1)}{3(n-1)} \1_{n-1}& \frac{-(n-2)(n+1)}{9(n-1)^2}J_{n-1} & \frac{(n+1)^2}{9(n-1)^2}J_{n-1}
\end{array}
\right],
\end{equation}

\begin{equation}\label{bkdef}
B_{k}:=  \frac{2}{(n-1)(2\phi_k+ \frac{1}{                                                        2\phi_k})^{2}} 
\left[
\begin{array}{ccc} 
 0&\0&\0\\
\0& \frac{C_k}{4\phi_{k  }^{2}}& \frac{C_k+\widetilde{C_{k}}}{ 4\phi_k^{2}  }\\ 
\0 & \frac{C_k+\widetilde{C_{k}}^{'}}{ 4\phi_{k}^{2}} & C_k \\
\end{array}
\right]~~~k=1,\dotsc,n-2,
\end{equation}
 where
 \[C_{k}=\cir(1,\cos(\frac{2\pi k }{n-1}),\dotsc,\cos(\frac{2 \pi (n-2)  k }{n-1}))\]
 \[\widetilde{C_{k}}= \cir(\cos(\frac{2 \pi k}{n-1}),\dotsc,\cos(\frac{2 \pi (n-2) k }{n-1}),1).\] 
 We now define the special Laplacian matrix for the gear graph $G_n$ by
 \begin{equation}\label{speciallap}
L_{even}:=A+ \sum_{k=1}^{\frac{n-2}{2}}B_k.
\end{equation}

\subsection{Inverse formula for $D(G_n)$}
We shall obtain the following result that gives the formula for the Moore-Penrose of $D(G_n)$.
\begin{theorem}\label{main}
Let $n \geq 4$ be an even integer. Then the Moore-Penrose inverse of 
$D(G_n)$ is given by 
\[D(G_n)^\dag = -\frac{1}{2}L_{even}+\frac{n-1}{2}uu',\]
where 
\[u= \huge(\frac{-(3n-13)}{(n^2+3n-4)},\frac{-(n-6)}{(n^2+3n-4)}\1_{n-1}',\frac{1}{(n+4)}\1_{n-1}' \huge)'.\]
\end{theorem}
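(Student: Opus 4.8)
The plan is to diagonalize $D(G_n)$ and the candidate matrix $M:=-\frac{1}{2}L_{even}+\frac{n-1}{2}uu'$ simultaneously, using the cyclic symmetry of the two outer blocks, and thereby reduce the claim $M=D(G_n)^\dag$ to a finite list of $2\times 2$ and $3\times 3$ pseudoinverse identities. Using the circulant eigenvectors $v_m=(1,\omega^m,\dotsc,\omega^{m(n-2)})'$ of (K3), I would introduce the embedded vectors $\widehat{v}_m:=(0,v_m',\0)'$ and $\widecheck{v}_m:=(0,\0,v_m')'$ in $\mathbb{C}^{2n-1}$, together with the three \emph{symmetric} vectors $(1,\0,\0)'$, $(0,\1_{n-1}',\0)'$, $(0,\0,\1_{n-1}')'$ spanning a subspace $V_0$. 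Since $J_{n-1}v_m=0$ and since $Sv_m,S'v_m,Tv_m$ are scalar multiples of $v_m$, the subspaces $V_0$ and $V_k:=\sp_{\mathbb{R}}\{\widehat{v}_k,\widecheck{v}_k,\widehat{v}_{n-1-k},\widecheck{v}_{n-1-k}\}$ for $k=1,\dotsc,\frac{n-2}{2}$ are mutually orthogonal, $D(G_n)$-invariant, and span $\rr^{2n-1}$. I would first record that $M$ is symmetric (each $C_k$ is a symmetric circulant, so each $B_k$ and hence $L_{even}$ is symmetric) and that it respects the \emph{same} decomposition: the rank-one matrix $A$ and $uu'$ live in $V_0$, whereas each $B_k$ acts only on $V_k$, because $C_k=\tfrac12\big(v_kv_k^*+v_{n-1-k}v_{n-1-k}^*\big)$ forces $C_kv_m=0$ unless $m\equiv\pm k$, and $\widetilde{C_k}$ annihilates the same vectors. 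As $D(G_n)$ is symmetric, $D(G_n)^\dag$ is the orthogonal direct sum of the pseudoinverses of its restrictions, so it suffices to prove $M=D(G_n)^\dag$ on $V_0$ and on each $V_k$ separately.

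On $V_0$ I would form the $3\times 3$ matrix of $D(G_n)$ in the symmetric basis, using $S\1=S'\1=(3n-7)\1$ and $T\1=(4n-12)\1$. Its kernel is spanned by $(n-1,-2\1',\1')'=\sum_i f^i$, consistent with (K1), and its two nonzero eigenvalues are exactly the $\lambda_j$ with eigenvectors $x_j=(\alpha_j,\beta_j\1',\1')'$ of (K2). Writing $A=\frac{9(n-1)}{(n+4)^2}ww'$ with $w=\big(1,\frac{n-2}{3(n-1)}\1',\frac{-(n+1)}{3(n-1)}\1'\big)'$, the restriction $M|_{V_0}=-\frac{9(n-1)}{2(n+4)^2}ww'+\frac{n-1}{2}uu'$ has rank at most two, matching $\rank\big(D(G_n)|_{V_0}\big)^\dag$. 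Verification then reduces to the three scalar identities $M|_{V_0}x_j=\lambda_j^{-1}x_j$ for $j=1,2$ and $M|_{V_0}(n-1,-2\1',\1')'=\0$, which are forced by the explicit constants $\alpha_j,\beta_j$ and by the factorization $n^2+3n-4=(n+4)(n-1)$ hidden in $u$.

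The heart of the argument is the sector $V_k$. Here I would compute the circulant eigenvalues of $S=\cir(1,3,\dotsc,3,1)$ and $T=\cir(0,2,4,\dotsc,4,2)$ on $v_k$; the crucial simplifications, both consequences of $\sum_{p=0}^{n-2}\omega^{pk}=0$, are $s_k=-2(1+\omega^{-k})$ and $t_k=-4-4\cos\frac{2\pi k}{n-1}=-8\phi_k^2$, the latter giving $\theta_k=t_k-2$ as in (K2). On $\sp\{\widehat{v}_k,\widecheck{v}_k\}$ the operator $D(G_n)$ is the Hermitian matrix $\left[\begin{smallmatrix}-2 & s_k\\ \overline{s_k} & t_k\end{smallmatrix}\right]$, which has rank one since $|s_k|^2=-2t_k=16\phi_k^2$, with nonzero eigenvalue $\theta_k=-2(4\phi_k^2+1)$ and eigenvector $(s_k,-8\phi_k^2)$; its pseudoinverse is $-\frac{1}{2(4\phi_k^2+1)^2}\left[\begin{smallmatrix}1 & -s_k/2\\ -\overline{s_k}/2 & 4\phi_k^2\end{smallmatrix}\right]$. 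Using $C_kv_k=\frac{n-1}{2}v_k$, $\widetilde{C_k}v_k=\frac{n-1}{2}\omega^{-k}v_k$, $\widetilde{C_k}'v_k=\frac{n-1}{2}\omega^{k}v_k$, and the prefactor identity $\big(2\phi_k+\frac{1}{2\phi_k}\big)^2=\frac{(4\phi_k^2+1)^2}{4\phi_k^2}$, I would reduce $-\frac12 B_k$ on $\sp\{\widehat{v}_k,\widecheck{v}_k\}$ to $-\frac{1}{2(4\phi_k^2+1)^2}\left[\begin{smallmatrix}1 & 1+\omega^{-k}\\ 1+\omega^{k} & 4\phi_k^2\end{smallmatrix}\right]$. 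These two matrices coincide \emph{precisely because} $1+\omega^{-k}=-\tfrac12 s_k$, i.e. exactly the identity $s_k=-2(1+\omega^{-k})$; the conjugate computation disposes of the $m=n-1-k$ part, giving $-\frac12 B_k|_{V_k}=\big(D(G_n)|_{V_k}\big)^\dag$.

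Assembling the sectors yields $M=D(G_n)^\dag$. The main obstacle is not the organizing principle but the bookkeeping in the $V_k$ step: one must evaluate the spectra of the non-symmetric block $S$ and of $T$ in closed form, collapse the sums $\sum_p\cos\frac{2\pi pk}{n-1}\,\omega^{\cdot}$ via $1+\cos\psi=2\cos^2(\psi/2)$, and carry the several factors of $\phi_k$ so that the designed cross-term $\frac{C_k+\widetilde{C_k}}{4\phi_k^2}$ in $B_k$ lines up with the off-diagonal entry $-\tfrac12 s_k$ of the true pseudoinverse. The $V_0$ computation is the other delicate point, since the exact constant $\frac{9(n-1)}{(n+4)^2}$ and the exact $u$ must be reverse-engineered to hit $\lambda_1^{-1}$ and $\lambda_2^{-1}$; I would pin these down by solving the two linear conditions $M|_{V_0}x_j=\lambda_j^{-1}x_j$ rather than by brute-force inversion. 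As a consistency check one may verify $L_{even}\1=\0$ and that $L_{even}$ is positive semidefinite, so that $-\tfrac12 L_{even}$ is the Laplacian-like term and $\frac{n-1}{2}uu'$ the rank-one correction of (K4), with $\beta=\1'D(G_n)^\dag\1=\tfrac{2}{n-1}$ and $u=D(G_n)^\dag\1$.
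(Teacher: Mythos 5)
Your proposal is correct, and the computations it rests on (the circulant eigenvalues $s_k=-2(1+\omega^{-k})$ of $S$ and $t_k=-8\phi_k^2$ of $T$, the rank-one identity $|s_k|^2=-2t_k$, and the decomposition $C_k=\re (v_kv_k^*)$) are exactly the identities that drive the paper's argument; but you organize them into a genuinely different proof. The paper does not verify $M=D^\dag$ block by block. It first exhibits the eigenvectors $q_k=(0,u_k',v_k')'$ of $D$ (your sector eigenvector $(s_k,-8\phi_k^2)$ in disguise), then invokes the Balaji--Bapat EDM identity (K4) to write $D^\dag=-\tfrac12 U^\dag+\tfrac{n-1}{2}(D^\dag\1)(D^\dag\1)'$ with $U=-\tfrac12 PDP$, proves $U^\dag=L_{even}$ from the spectral decomposition of $U$ (the $q_k$ together with the rational eigenvector $y=(1,\tfrac{n-2}{3(n-1)}\1_{n-1}',-\tfrac{n+1}{3(n-1)}\1_{n-1}')'$ with eigenvalue $\tfrac{2n-1}{n+4}$), and finally pins down $u=D^\dag\1$ by a trace identity combined with the linear system $DD^\dag\1=\1$. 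Your route dispenses with (K4) and with the EDM property altogether, which is a gain in self-containedness, and your $V_k$ sector analysis is essentially the paper's computation of $-\tfrac{2}{\theta_k}q_kq_k^*/\langle q_k,q_k\rangle$ reread as a $2\times 2$ pseudoinverse. The trade-off is on $V_0$: the paper's detour through $y$ and $D^\dag\1$ is precisely what lets it avoid ever expanding the spectral projections onto $x_1,x_2$, whose coordinates $\alpha_j,\beta_j$ contain the surd $\sqrt{5}\sqrt{n(2n-9)+12}$; your plan requires verifying $M|_{V_0}x_j=\lambda_j^{-1}x_j$ for $j=1,2$ head-on (or, better, computing the pseudoinverse of the $3\times 3$ symmetric-sector matrix directly from its known kernel direction $(n-1,-2,1)$), a finite but surd-laden computation that the proposal asserts rather than carries out. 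Completing that one verification would make your argument a complete, and arguably more elementary, proof of the theorem.
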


\subsection{Illustration for $G_6$}
The formula in Theorem \ref{main} is easy to verify for $G_4$. The distance matrix of the gear graph 
$G_6$ is 
\[D(G_6)= \left[
\begin{array}{ccccccccc}0& \1_{5}'&2\1_{5}'\\
\1_{5}&2(J_{5}-I_{5})& \cir( 1,3,3,3,1)\\
2\1_{5}&\cir(1,1,3,3,3)&\cir(0,2,4,4,2)
 \end{array}
\right].
\]
The Laplacian $L:=L_{even}$ of $G_6$ is given by
\[L=
  \left[
\begin{array}{ccccccccc}
\frac{9}{20}& \frac{3}{25}\1_{5}'   &\frac{-21}{100}\1_{5}'\\
 \frac{3}{25}\1_{5}&\cir( \frac{34}{125}, \frac{-16}{125},   \frac{9}{125},\frac{   9}{125},\frac{ -16}{125})&\cir( \frac{3}{125},\frac{-22}{125},  \frac{3}{125},\frac{   -22}{125},\frac{ 3}{125})\\
 \frac{-21}{100}\1_{5}'&
\cir( \frac{ 3}{125},\frac{3}{125},\frac{-22}{125},  \frac{3}{125},\frac{   -22}{125})&\cir( \frac{ 129}{500},\frac{29}{500},\frac{29}{500},  \frac{29}{500},\frac{   29}{500})
\end{array}
\right].
\]
The vector $u$ in Theorem \ref{main} is
\[u=(\frac{1}{10},\0,\frac{1}{10}\1_{5}')'.\]
According to Theorem \ref{main},
\[D^\dagger =-\frac{1}{2}L +\frac{5}{2}uu'.\]

The right hand side of the above equation is
\[\widetilde{D}=
  \left[
\begin{array}{ccccccccccccccc}
\frac{-1}{5}& \frac{-3}{50}\1_{5}'   &\frac{2}{25}\1_{5}'\\
 \frac{-3}{50}\1_{5}
 &\cir( \frac{-17}{125}, \frac{8}{125},   \frac{-9}{250},\frac{   -9}{250},\frac{ 8}{125})
 &\cir( \frac{-3}{250},\frac{11}{125},  \frac{-3}{250},\frac{   11}{125},\frac{ -3}{250})\\
 
 \frac{2}{25}\1_{5}'&
 \cir(\frac{ -3}{250}, \frac{-3}{250},\frac{11}{125},  \frac{-3}{250},\frac{   11}{125})
 &\cir( \frac{ -13}{125},\frac{-1}{250},\frac{-1}{250},  \frac{-1}{250},\frac{   -1}{250})
\end{array}
\right].
\]
It can be verified that $D^\dag=\widetilde{D}$. We shall now prove the formula for any
general $n$. For brevity, we shall use $D$ for $D(G_n)$.

\subsection{Eigenvalues and eigenvectors of $D$}
To prove Theorem \ref{main}, we precisely compute all the non-zero eigenvalues and the corresponding eigenvectors of $D$. This is partly done in
(K2). The nonzero eigenvalues of $D$ are given by $\lambda_1,\lambda_2$ and 
$\theta_1,\dotsc,\theta_{n-2}$ in (K2). 
In the sequel, $j$ will denote an element in $\{1,2\}$
and $k$ in $\{1,\dotsc,n-2\}$.
Eigenvectors $x_{j}$ corresponding to 
$\lambda_{j}$ are given in $(\ref{xj})$. We now compute the eigenvectors
$q_k$ corresponding to $\theta_k$.
The following elementary lemma will be useful in the sequel.

\begin{lemma}\label{Z2}
\begin{enumerate}
\item[\rm (i)] The eigenvalues of $\cir(0,2,\underbrace{4,\dotsc,4}_{n-4},2)$ are
\[4(n-3),-8\cos^{2}(\frac{\pi}{n-1}), -8\cos^{2}(\frac{2\pi}{n-1})
\dotsc,-8\cos^{2}(\frac{(n-2)\pi}{n-1}).
\]
\item[\rm (ii)] The eigenvalues of $\cir(1,\underbrace{3,\dotsc,3}_{n-3},1)$ are
\[3n-7,-2(1+\omega^{-1}),\dotsc,-2(1 +\omega^{-(n-2)}).\] 
\end{enumerate}
\end{lemma}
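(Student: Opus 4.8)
The plan is to apply the circulant eigenvalue formula from (K3) directly, since both matrices are $(n-1)\times(n-1)$ circulants with explicitly given first rows. Writing the eigenvalues as $\sigma_m=\sum_{p=0}^{n-2}c_p\omega^{pm}$ for $m=0,\dotsc,n-2$, with $\omega=e^{2\pi i/(n-1)}$, I would separate the index $m=0$ from the indices $m\in\{1,\dotsc,n-2\}$. The case $m=0$ is immediate: $\sigma_0$ is just the row sum of the matrix, which equals $0+2+4(n-4)+2=4(n-3)$ for the matrix in (i), and $1+3(n-3)+1=3n-7$ for the matrix in (ii). These account for the first listed eigenvalue in each part.

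For $m\in\{1,\dotsc,n-2\}$, the single identity doing all the work is the vanishing of the full geometric sum, $\sum_{p=0}^{n-2}\omega^{pm}=0$, valid because $\omega^{m}$ is then a nontrivial $(n-1)$-th root of unity. For part (ii), with coefficients $c_0=c_{n-2}=1$ and $c_p=3$ otherwise, this gives $\sum_{p=1}^{n-3}\omega^{pm}=-1-\omega^{(n-2)m}$, so that
\[
\sigma_m=1+3\bigl(-1-\omega^{(n-2)m}\bigr)+\omega^{(n-2)m}=-2-2\omega^{(n-2)m}=-2\bigl(1+\omega^{-m}\bigr),
\]
using $\omega^{(n-2)m}=\omega^{-m}$ since $\omega^{(n-1)m}=1$. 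This matches the claimed list. For part (i), with $c_0=0$, $c_1=c_{n-2}=2$ and $c_p=4$ in between, the same cancellation yields $\sigma_m=-4-2\omega^{m}-2\omega^{-m}=-4-4\cos\bigl(\tfrac{2\pi m}{n-1}\bigr)$; the double-angle identity $\cos 2\theta=2\cos^{2}\theta-1$ then collapses this to $-8\cos^{2}\bigl(\tfrac{\pi m}{n-1}\bigr)$, as required.

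There is no deep obstacle here: the statement is a direct consequence of (K3) together with the geometric-sum identity. The only point requiring care is the bookkeeping of the two boundary coefficients that interrupt the otherwise constant pattern (the entries equal to $2$ at positions $c_1,c_{n-2}$ in (i), and equal to $1$ at positions $c_0,c_{n-2}$ in (ii)), since it is precisely their asymmetry with the bulk value that survives after the geometric sum is eliminated. Keeping track of whether an index contributes $\omega^{m}$ or $\omega^{(n-2)m}=\omega^{-m}$, and invoking $\omega^{m}+\omega^{-m}=2\cos\bigl(\tfrac{2\pi m}{n-1}\bigr)$, completes both computations.
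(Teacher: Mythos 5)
Your proposal is correct and follows essentially the same route as the paper: both invoke (K3), extract the row-sum eigenvalue separately (you via the $m=0$ index, the paper via $T\1_{n-1}=4(n-3)\1_{n-1}$ and $S\1_{n-1}=(3n-7)\1_{n-1}$, which is the same computation), and for $m\neq 0$ reduce everything to the vanishing geometric sum $\sum_{p=0}^{n-2}\omega^{pm}=0$ together with $\omega^{(n-2)m}=\omega^{-m}$ and the double-angle identity. The bookkeeping of the boundary coefficients in both parts matches the paper's computation exactly.
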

\begin{proof}
Put $T:=\cir(0,2,\underbrace{4,\dotsc,4}_{n-4},2)$.
Since \[T \1_{n-1}=4(n-3) \1_{n-1},\]
$4(n-3)$ is an eigenvalue of $T$.
By (K3), 
\[\alpha_{m}:=2\omega^{m}+4\omega^{2m}+4\omega^{3m}+\dotsc +4\omega^{(n-3)m}+2\omega^{(n-2)m}~~m=1,\dotsc,n-2,\]
are eigenvalues of $T$. 
We rewrite $\al_{m}$ as
\[\al_{m}=4(\omega^{m}+\omega^{2m}+\dotsc + \omega^{(n-2)m})-2\omega^{m}-2\omega^{(n-2)m}. \]
By the identities \[\omega^{m}+\omega^{2m}+\dotsc + \omega^{(n-2)m}=\frac{(\omega^{m}-\omega^{(n-1)m})}{ (1-\omega^{m})},\]
$\omega^{(n-1)m}=1$ and $\omega^{(n-2)m}=\omega^{-m}$,
 it now follows that
 \[\al_{m}=-4-2\omega^{m}-2\omega^{-m}.\]
Because $\omega^{m}+\omega^{-m}=2\cos(\frac{2 \pi m}{n-1})$,
 \[\al_{m}=-4-4\cos(\frac{2 \pi  m}{n-1}).\]
 Since $1+\cos(\frac{2 \pi m}{n-1})=2\cos^{2}(\frac{\pi m}{n-1})$,
 \[\al_{m}=-8\cos^{2}(\frac{\pi m}{n-1}).\]
 The proof of (i) is complete.

Define $S:=\cir(1,\underbrace{3,\dotsc,3}_{n-3},1)$.
Since \[S \1_{n-1}=(3n-7) \1_{n-1},\] $3n-7$ is an eigenvalue of $S$.
By (K3),
\[\beta_m= 1+3\omega^{m}+\cdots +3\omega^{(n-3)m}+\omega^{(n-2)m}~~m=1,\dotsc,n-2\]
are eigenvalues of $S$. 
We rewrite the above equation as
\[\beta_m=3(1+\omega^{m}+\cdots+\omega^{(n-2)m})-2(1+\omega^{(n-2)m}).\]
Since $1+\omega^{m}+\cdots+\omega^{(n-2)m}=0$,
we get
\[\beta_{m}=-2(1+\omega^{(n-2)m}). \]
 By the identity $\omega^{(n-2)m}=\omega^{-m}$,
 \[ \beta_m=-2(1+\omega^{-m}).\]
This completes the proof of (ii).
\end{proof}
We now compute the eigenvectors corresponding to the eigenvalue $\theta_k$ in (K2).
\begin{theorem}\label{ee}

Define
\[ q_{k}:=(0,u_k',v_k')'~~~k=1,\dotsc,n-2,\]
where 
\[u_{k}=\frac{-1}{8\cos^{2}(\frac{\pi k}{n-1})}S v_{k},~~S=\cir(1,\underbrace{3,\dotsc,3}_{n-3},1)~~\mbox{and}\]
\[v_{k}=(1,\omega^{k},\omega^{2k},\dotsc,\omega^{k(n-2)})'.\]
Then, 
\begin{enumerate}
\item[\rm (i)] $Dq_k=\theta_k q_k$.
\item[\rm (ii)] The vectors $q_1,\dotsc,q_{n-2}$ are mutually orthogonal.
\item[\rm (iii)] $\langle x_j,  q_{k}\rangle=0$ for all $j,k$.
\item[\rm (iv)] $\langle \1, q_k\rangle=0$. 
\end{enumerate}

\end {theorem}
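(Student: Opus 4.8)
The plan is to exploit the fact that $v_k$ is a common eigenvector of every circulant block occurring in $D$. Since $v_k=(1,\omega^k,\dotsc,\omega^{k(n-2)})'$ is precisely the $k^{\rm th}$ eigenvector in (K3), it is simultaneously an eigenvector of $S$, of $S'=\cir(1,1,\underbrace{3,\dotsc,3}_{n-3})$, and of $T$. By Lemma \ref{Z2}(ii) one has $Sv_k=-2(1+\omega^{-k})v_k$; repeating that computation for $S'$ gives $S'v_k=-2(1+\omega^{k})v_k$; and Lemma \ref{Z2}(i) gives $Tv_k=-8\phi_k^2 v_k$. Consequently $u_k=\frac{-1}{8\phi_k^2}Sv_k=\frac{1+\omega^{-k}}{4\phi_k^2}v_k=:c_kv_k$ is itself a scalar multiple of $v_k$. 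This single observation, together with $\1_{n-1}'v_k=0$ (a full sum of $(n-1)^{\rm th}$ roots of unity), drives all four parts.

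For (i) I would evaluate $Dq_k$ block by block against the partition in (\ref{dgn}). Because $u_k=c_kv_k$ we also have $\1_{n-1}'u_k=0$, so the top scalar entry is $\1_{n-1}'u_k+2\1_{n-1}'v_k=0=\theta_k\cdot 0$. In the middle block $J_{n-1}u_k=\1_{n-1}(\1_{n-1}'u_k)=\0$, hence $2(J_{n-1}-I_{n-1})u_k+Sv_k=-2u_k+Sv_k$, and by the very definition of $u_k$ we have $Sv_k=-8\phi_k^2u_k$, giving $(-2-8\phi_k^2)u_k=\theta_ku_k$. The bottom block is $S'u_k+Tv_k$; substituting $u_k=c_kv_k$ and $S'v_k=-2(1+\omega^k)v_k$ turns $S'u_k$ into $\frac{-2(1+\omega^k)(1+\omega^{-k})}{4\phi_k^2}v_k$, which collapses to $-2v_k$ via the identity $(1+\omega^k)(1+\omega^{-k})=2+2\cos\frac{2\pi k}{n-1}=4\phi_k^2$. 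Adding $Tv_k=-8\phi_k^2v_k$ yields $(-2-8\phi_k^2)v_k=\theta_kv_k$, so $Dq_k=\theta_kq_k$.

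Parts (ii)--(iv) are then immediate. For (ii), since $q_k=(0,c_kv_k',v_k')'$ is built entirely from $v_k$, the mutual orthogonality of $v_1,\dotsc,v_{n-2}$ in (K3) gives $\langle q_k,q_l\rangle=(\overline{c_k}c_l+1)\langle v_k,v_l\rangle=0$ whenever $k\neq l$. For (iv), the first coordinate is $0$ and both $\1_{n-1}'u_k$ and $\1_{n-1}'v_k$ vanish, so $\langle\1,q_k\rangle=0$. For (iii), writing $x_j=(\alpha_j,\beta_j\1_{n-1}',\1_{n-1}')'$, the inner product reduces to $\beta_j\langle\1_{n-1},u_k\rangle+\langle\1_{n-1},v_k\rangle$, and both terms vanish because $\1_{n-1}'v_k=0$ and $u_k$ is a multiple of $v_k$.

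The main obstacle is the bottom block in (i). It is the only place where the non-symmetric transpose $S'$ enters and where the two halves $u_k$ and $v_k$ of $q_k$ must interact, and its success rests entirely on the identity $(1+\omega^k)(1+\omega^{-k})=4\phi_k^2$ that reconciles the product of the off-diagonal circulant eigenvalues with $\theta_k$. Once the proportionality $u_k=c_kv_k$ and this identity are in place, the remainder is routine bookkeeping.
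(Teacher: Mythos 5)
Your proposal is correct and follows essentially the same route as the paper: block-by-block evaluation of $Dq_k$ using the circulant eigenstructure from (K3) and Lemma \ref{Z2}, the relation $S'v_k=-2(1+\omega^{k})v_k$, and the identity $(1+\omega^{k})(1+\omega^{-k})=4\cos^2(\frac{\pi k}{n-1})$ to settle the bottom block, with (ii)--(iv) reduced to orthogonality of the $v_k$ and $\1_{n-1}'v_k=0$. Your one streamlining --- recording at the outset that $u_k=\frac{1+\omega^{-k}}{4\cos^2(\frac{\pi k}{n-1})}v_k$ is a scalar multiple of $v_k$ --- is a tidy packaging of substitutions the paper performs step by step, but it does not change the argument.
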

\begin{proof}
Fix $k \in \{1,\dotsc,n-2\}$.
Recall that 
\[D=\left[
\begin{array}{ccccccccc}
0 & \1_{n-1}' & 2 \1_{n-1}'\\
\1_{n-1} & 2(J_{n-1}-I_{n-1}) & S\\
2 \1_{n-1} & S' & T
\end{array}
\right].\]
Since $q_k=(0,u_k',v_k')'$,
\begin{equation}\label{q0}
D q_k=
\left[
\begin{array}{cccc}
\1_{n-1}'u_k+2 \1_{n-1}'v_k\\
2J_{n-1}u_k-2I_{n-1}u_k+Sv_k\\
S'u_k+Tv_k
\end{array}
\right].
\end{equation}
We claim that $\nu_k:=\1_{n-1}'u_k+2 \1_{n-1}'v_k=0$.
As
\begin{equation} \label{uk}
u_k=\frac{-1}{8\cos^{2}(\frac{\pi k}{n-1})}Sv_k,
\end{equation}
\begin{equation}\label{q1}
  \nu_k =(\frac{-1}{8\cos^{2}(\frac{\pi k}{n-1})})\1'_{n-1}S v_{k}+2 \1_{n-1}'v_k.
\end{equation}  
By Lemma $\ref{Z2}$, 
$-2(1+\omega^{-k})$ is an eigenvalue of $S$. In view of
(K3), \[S v_{k}=-2(1+\omega^{-k} )v_{k},\]
where
$v_{k}=(1,\omega^{k},\omega^{2k},\dotsc,\omega^{k(n-2)})'$. 
 Thus from equation $(\ref{q1})$, we have,
 \begin{equation}\label{q2}
\nu_k =\frac{2(1+\omega^{-k} )}{8\cos^{2}(\frac{\pi k}{n-1})}\1_{n-1}'   v_{k}+2\1_{n-1}'   v_{k}.
\end{equation} 
Since $S$ is a circulant matrix, the result in
(K3) can be applied to $S$. This tells that
$\1_{n-1}$ and $v_k$ are orthogonal.
 Thus by (\ref{q2}),
\begin{equation*}\label{q3}
\nu_k=0.
\end{equation*}
The claim is proved.

Define
\[w_{k}:=2J_{n-1}u_k-2I_{n-1}u_k+Sv_k.\]
We show that $w_k=\theta_k u_k$. 
Since $J_{n-1}=\1_{n-1} \1_{n-1}^{'}$, we have
\[w_{k}=2\1_{n-1}\1_{n-1}'u_k-2u_k+Sv_k. \]
Because $\1_{n-1}' v_k=0$ and $\nu_k=0$, we get 
\begin{equation} \label{1'uk=0}
\1_{n-1}'u_k=0.
\end{equation}
So,
$w_{k}=-2u_{k}+Sv_{k}$.
By equation (\ref{uk}), we now have
\begin{equation*} \label{2j}
w_{k}=(\frac{1}{4\cos^{2}(\frac{\pi k}{n-1})}+1)S v_{k}~~\mbox{and}~~
Sv_k=-8\cos^{2}(\frac{\pi k}{n-1})u_k.
\end{equation*}
 This leads to
\begin{equation} \label{2k}
\begin{aligned}
w_k&=(\frac{1}{4\cos^{2}(\frac{\pi k}{n-1})}+1)(-8\cos^{2}(\frac{\pi k}{n-1}))u_k\\
&=-2(1+4\cos^{2}(\frac{\pi k}{n-1}))u_k.
\end{aligned}
\end{equation}

We recall from equation (\ref{thetak}) that
\[\theta_{k}:=-8\cos^{2}(\frac{\pi k}{n-1})-2. \]
Hence from (\ref{2k}), we get
\begin{equation}\label{q5}
w_k=\theta_k u_k.
\end{equation}

Define
\[l_k:=S'u_k+Tv_k.\]
We show that $l_{k}=\theta_k v_k$. 
By (\ref{uk}),   
\begin{equation}\label{q6}
S'u_k+Tv_k=(\frac{-1}{8\cos^{2}(\frac{\pi k}{n-1})})S'Sv_k+Tv_k.
\end{equation}
By  Lemma $\ref{Z2}$ and  (K3),
 \[Sv_k=-2(1+\omega^{-k})v_k.\]  
Because $S$ is a circulant matrix, it is normal. Hence,
 \[S'v_k=-2(1+\omega^{k})v_k.\]   
Again by Lemma $\ref{Z2}$ and (K3), we have  
 \[Tv_k=-8\cos^{2}(\frac{\pi k}{n-1})v_k.\]
 Thus from $(\ref{q6})$, we have 
 \begin{equation}\label{q7}
\begin{aligned}
l_k&= \frac{2(1+\omega^{-k})}{8\cos^{2}(\frac{\pi k}{n-1})}S'v_k -8\cos^{2}(\frac{\pi k}{n-1})v_k         \\                                                             &=\frac{-4(1+\omega^{-k})(1+\omega^k)}{8\cos^{2}(\frac{\pi k}{n-1})}v_k -8\cos^{2}(\frac{\pi k}{n-1})v_k                                                             
 \\&= -\frac{4(2+\omega^{k}+\omega^{-k})  }{8\cos^{2}(\frac{\pi k}{n-1})}v_k-8\cos^{2}(\frac{\pi k}{n-1})v_k.
\end{aligned}
\end{equation}
Using the identity 
\[\omega^{k}+\omega^{-k}=2\cos(\frac{2 \pi k} {n-1}),\] in
$(\ref{q7})$,
\begin{equation}\label{q91}
l_k=
-\frac{(1+ \cos(\frac{2 \pi k} {n-1}) }{\cos^{2}(\frac{\pi k}{n-1})}v_k-8\cos^{2}(\frac{\pi k}{n-1})v_k.
\end{equation}
Since 
\[1+\cos(\frac{2\pi k}{n-1})=2\cos^{2}(\frac{\pi k}{n-1}),\]
we deduce
\begin{equation*}\label{q92}
l_k=(-2-8\cos^{2}(\frac{\pi k}{n-1}))v_k.
\end{equation*}
Hence,
\begin{equation*}
l_k=\theta_k v_k.
\end{equation*}
  By (\ref{q0}),
  \[Dq_{k}=(\nu_k,w_k',l_k')'
 =(0,\theta_{k}u_k', \theta_k v_k')'. \]
 Therefore,
\[Dq_k=\theta_k q_k.\]
This proves (i).

Let $i_1$ and $i_2$ be any two distinct integers in $\{1,\dotsc,n-2\}$.
We show that \[\langle{q_{i_1}}, {q_{i_2}}\rangle =0.\] 
Since
$q_{k}=(0,u_{k}',v_{k}')'$, we have
 \[\langle q_{i_1}, {q_{i_2}}\rangle =  \langle u_{i_1},u_{i_2}\rangle + \langle v_{i_1},v_{i_2}\rangle.\]
By   (K3), $\langle v_{i_1},v_{i_2}\rangle=0$.  
  Since  $u_k=\frac{-1}{8\cos^{2}(\frac{\pi k}{n-1})}(Sv_{k})$, 
  \begin{equation}\label{p1}
 \langle{u_{i_1}}, {u_{i_2}} \rangle=\frac{1}{64\cos^{2}(\frac{i_1 \pi }{n-1})\cos^{2}(\frac{i_2 \pi }{n-1})} \langle Sv_{i_1},Sv_{i_2} \rangle.
\end{equation}
 By (K3) and Lemma \ref{Z2}, 
  \[Sv_{k}=-2(1+\omega^{-k})v_{k}.\]   
Using this in (\ref{p1}), we have
\[  \langle{u_{i_1}}, {u_{i_2}} \rangle=\frac{4(1+\omega^{i_2})(1+\omega^{-i_1})}{64(\cos^{2}\frac{i_1 \pi }{n-1})(\cos^{2}\frac{i_2 \pi }{n-1})} \langle v_{i_1},v_{i_2} \rangle.\]
As $v_{i_1}$ and $v_{i_2}$ are orthogonal, it follows that $  \langle{u_{i_1}}, {u_{i_2}} \rangle=0$. Therefore,
\[\langle{q_{i_1}}, {q_{i_2}}\rangle =0.\]
This proves (ii).

Since $x_{j}=(\alpha_{j},\beta_{j}\1_{n-1}',\1_{n-1}' )^{'}$ and 
  $q_{k}:=(0,u_k',v_k')'$, we see that 
    \[\langle x_j,q_k\rangle=\langle\beta_{j}\1_{n-1},u_k\rangle+\langle\1_{n-1},v_k\rangle~~\mbox{and}~~\langle\1,q_k\rangle=\langle\1_{n-1},u_k\rangle+\langle\1_{n-1},v_k\rangle.\]
By  (\ref{1'uk=0}), $\langle \1_{n-1},u_k\rangle =0$ and
 $\langle\1_{n-1},v_k\rangle =0$.
 Hence $\langle x_j,q_k\rangle =0$ and $\langle \1,q_k\rangle=0$.
The proof of (iii) and (iv) is complete.
\end{proof}

\subsection{Computation of $L_{even}$}
Let $P$ be the orthogonal projection onto $\1^{\perp}$.
We now show that
\[L_{even}=(-\frac{1}{2} PDP)^{\dag}, \]
where $L_{even}$ is given in (\ref{speciallap}).
Put $U:=-\frac{PDP}{2}$.    
We have seen that the non-zero eigenvalues and the corresponding eigenvectors of $D$ are 
given by
\[D x_{j}=\lambda_{j}x_j~~~(j=1,2)~~\mbox{and}~~
Dq_{k}=\theta_{k} q_{k}~~(k=1,\dotsc,n-2). \]
Define
\begin{equation}\label{normxq}
\begin{aligned}
\x_{j}:&=\frac{x_j}{\|x_j\|}~~j=1,2;\\
\q_{k}:&=\frac{q_k}{\|q_k\|}~~k=1,\dotsc,n-2. 
\end{aligned}
\end{equation}
To compute $U^\dag$,
we begin with an elementary lemma.

\begin{lemma} \label{d+}
Let $a_{j}:=\langle\1,\x_{j}\rangle$. Then
\[D^{\dagger}\1 =\sum_{j=1}^{2} a_{j} \frac{\x_{j}}{\la_{j}}  
~~\mbox{and}~~
\1' D^\dagger \1=\frac{2}{n-1}.
\]
\end{lemma}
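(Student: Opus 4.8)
The plan is to build the spectral decomposition of $D$ from the eigendata already assembled, read off the first identity at once, and then evaluate $\1'D^\dagger\1$ by compressing $D$ to a small invariant subspace rather than substituting the surd-laden closed forms of $\lambda_j,\alpha_j,\beta_j$.

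First I would record that $D$ is symmetric with exactly $n=\rank D$ nonzero eigenvalues, namely $\lambda_1,\lambda_2$ and $\theta_1,\dots,\theta_{n-2}$. The associated eigenvectors $x_1,x_2,q_1,\dots,q_{n-2}$ are pairwise orthogonal: $x_1\perp x_2$ since $\lambda_1\ne\lambda_2$ (the radical $\sqrt{5}\sqrt{n(2n-9)+12}$ is strictly positive for $n\ge4$), while $\langle x_j,q_k\rangle=0$ and the $q_k$ are mutually orthogonal by Theorem \ref{ee} (ii)--(iii). Normalizing as in (\ref{normxq}) gives an orthonormal basis $\{\x_1,\x_2,\q_1,\dots,\q_{n-2}\}$ of the range of $D$, so
\[D^\dagger=\sum_{j=1}^{2}\frac{1}{\lambda_j}\x_j\x_j'+\sum_{k=1}^{n-2}\frac{1}{\theta_k}\q_k\q_k'.\]
Applying this to $\1$ and using $\langle\1,\q_k\rangle=0$ (Theorem \ref{ee} (iv)) kills every $\theta_k$-term, leaving $D^\dagger\1=\sum_{j}a_j\x_j/\lambda_j$ with $a_j=\langle\1,\x_j\rangle$; this is the first identity. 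Pairing with $\1$ gives $\1'D^\dagger\1=a_1^2/\lambda_1+a_2^2/\lambda_2$.

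The main obstacle is evaluating this last sum: done naively it forces one to compute $\|x_j\|^2$ and $\langle\1,x_j\rangle$ in terms of the radical and then clear conjugate surds, which is unpleasant. I would instead exploit the $3$-dimensional subspace $V:=\sp\{w^{(1)},w^{(2)},w^{(3)}\}$ of block-constant vectors, where
\[w^{(1)}:=(1,\0_{n-1}',\0_{n-1}')',\quad w^{(2)}:=(0,\1_{n-1}',\0_{n-1}')',\quad w^{(3)}:=(0,\0_{n-1}',\1_{n-1}')'.\]
Both $x_1,x_2$ and $\1$ lie in $V$, and using the block row sums of (\ref{dgn}) (namely $S\1_{n-1}=S'\1_{n-1}=(3n-7)\1_{n-1}$, $T\1_{n-1}=4(n-3)\1_{n-1}$ and $2(J_{n-1}-I_{n-1})\1_{n-1}=2(n-2)\1_{n-1}$) one checks $Dw^{(i)}\in V$, so $V$ is $D$-invariant. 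Since $D$ is symmetric, $\rr^{2n-1}=V\oplus V^{\perp}$ is an orthogonal $D$-invariant splitting and $D^\dagger$ splits likewise, whence $\1'D^\dagger\1=c'\widetilde M^{\dagger}c$, where $\widetilde M$ is the matrix of $D|_V$ in the orthonormal basis $w^{(1)},\,w^{(2)}/\sqrt{n-1},\,w^{(3)}/\sqrt{n-1}$ and $c=(1,\sqrt{n-1},\sqrt{n-1})'$ encodes $\1$. A direct reading of those row sums gives
\[\widetilde M=\begin{pmatrix}0&\sqrt{n-1}&2\sqrt{n-1}\\ \sqrt{n-1}&2(n-2)&3n-7\\ 2\sqrt{n-1}&3n-7&4(n-3)\end{pmatrix}.\]

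Finally I would solve $\widetilde M y=c$. This $3\times3$ system is singular of rank $2$, but $c$ is orthogonal to $\ker\widetilde M$ (because $\1$ is orthogonal to the null space of $D$ by (\ref{1fi=0})), so a solution exists and $c'y=c'\widetilde M^\dagger c$ is independent of the choice of $y$. Writing $s=\sqrt{n-1}$, the first row gives $y_2+2y_3=1/s$, the combination of the remaining two rows is automatically consistent, and $c'y=y_1+s(y_2+y_3)$. Substituting $y_1=sy_3-\frac{n-3}{n-1}$ and $y_2+y_3=\frac{1}{s}-y_3$ makes the $y_3$-terms cancel, leaving
\[\1'D^\dagger\1=c'y=1-\frac{n-3}{n-1}=\frac{2}{n-1},\]
as claimed. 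The only delicate point is organizational: routing the evaluation through the invariant $3\times3$ compression so that the radical never appears.
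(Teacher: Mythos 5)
Your proof is correct. The first identity is obtained exactly as in the paper: spectral decomposition of $D$ over the orthonormal eigenbasis $\{\x_1,\x_2,\q_1,\dots,\q_{n-2}\}$, after which $\langle \1,\q_k\rangle=0$ kills every $\theta_k$-term (one cosmetic point: write $\q_k\q_k^{\ast}$ rather than $\q_k\q_k'$, since the $\q_k$ are complex). For the second identity the underlying mechanism is the same as the paper's, but your derivation is genuinely different in presentation. The paper simply exhibits the particular vector $w=(\frac{3-n}{n-1},\frac{1}{n-1}\1_{n-1}',\0)'$, verifies $Dw=\1$ and $\langle\1,w\rangle=\frac{2}{n-1}$ by direct multiplication, and then argues that $w-D^{\dagger}\1$ lies in the null space of $D$, which is orthogonal to $\1$ by (\ref{1fi=0}), so that $\1'D^{\dagger}\1=\langle\1,w\rangle$. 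You reach the same conclusion by compressing $D$ to the $3$-dimensional invariant subspace of block-constant vectors and solving the resulting singular $3\times 3$ system; your solution line at $y_3=0$ is precisely the paper's $w$. Your route costs a little more writing but explains where $w$ comes from and makes transparent why the answer is independent of the preimage chosen; the paper's route is shorter but its $w$ appears out of thin air. Both rest on the same two facts: $\1\in\mathrm{range}(D)$ and $\1\perp\ker D$.
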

\begin{proof}
 By spectral decomposition, 
\begin{equation*}
D=\sum_{j=1}^{2}{\lambda_{j} {\x_j}{\x_j}^{'}}+                   \sum_{k=1}^{n-2}  {\theta_{k} {\q_k}{\q_k}^*}.
\end{equation*}
This implies
\begin{equation}\label{a2}
D^\dagger=\sum_{j=1}^{2}{\frac{1}{\lambda_{j}} {\x_{j}} {\x_{j}}^{'}}+                   \sum_{k=1}^{n-2}  {\frac{1}{\theta_{k}}  {\q_{k}}\q_{k}^\ast}.
\end{equation}
In view of item (iv) in Theorem $\ref{ee}$, $\langle \1,\q_k\rangle =0$. Thus, if
$a_{j}:=\langle\1,\x_{j}\rangle $, then 
\begin{equation*}
 D^{\dagger}\1 =\sum_{j=1}^{2}   {a_j}{\frac{\x_{j}}{\lambda_{j}}   
   }.
 \end{equation*} 
 Define \[ w:=(\frac{3-n}{n-1},\frac{1}{n-1}\1_{n-1},\0)'.\] Then, 
\[\langle \1,w \rangle=\frac{2}{n-1}~~\mbox{{and}}~~
 Dw=\1 .\] So, $D D^{\dag} \1=\1$. This implies that
 $w - D^{\dag} \1$ is in the null-space of $D$. By (\ref{1fi=0}),
 \[\langle \1,w-D^{\dag} \1 \rangle =0.\]  Therefore, 
 \[\langle \1,w \rangle   = \langle \1,D^{\dag}\1 \rangle=\frac{2}{n-1}.\]
 This completes the proof.
 \end{proof}
\begin{lemma}\label{qk}
 \[U^{\dagger} \q_k = -\frac{2}{\theta_k}\q_k~~~~k=1,\dotsc,n-2.\]
\end{lemma}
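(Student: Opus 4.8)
The plan is to exhibit each $\q_k$ as an eigenvector of $U$ belonging to a nonzero eigenvalue, and then to use that the Moore--Penrose inverse acts as the ordinary inverse on the range of $U$. First I would record that $U$ is Hermitian: since $P=I-\frac{J}{2n-1}$ and $D$ are real symmetric, $U'=-\frac{1}{2}P'D'P'=-\frac{1}{2}PDP=U$.

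The key input is item (iv) of Theorem \ref{ee}, namely $\langle \1,q_k\rangle=0$, which after normalization gives $\langle \1,\q_k\rangle=0$ and hence $P\q_k=\q_k$. Using this together with $D\q_k=\theta_k\q_k$ from item (i) of Theorem \ref{ee}, I would compute
\[
U\q_k=-\tfrac{1}{2}PDP\q_k=-\tfrac{1}{2}PD\q_k=-\tfrac{\theta_k}{2}P\q_k=-\tfrac{\theta_k}{2}\q_k,
\]
so that $\q_k$ is an eigenvector of $U$ with eigenvalue $-\frac{\theta_k}{2}$.

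Next I would note from (\ref{thetak}) that $\theta_k=-8\cos^{2}(\frac{\pi k}{n-1})-2\le -2<0$, so the eigenvalue $-\frac{\theta_k}{2}$ is nonzero. Therefore $\q_k=-\frac{2}{\theta_k}(U\q_k)$ lies in the range of $U$. Since $U$ is Hermitian, $U^{\dagger}U$ is the orthogonal projection onto the range of $U$, whence $U^{\dagger}U\q_k=\q_k$. Substituting $U\q_k=-\frac{\theta_k}{2}\q_k$ gives $-\frac{\theta_k}{2}U^{\dagger}\q_k=\q_k$, and dividing by $-\frac{\theta_k}{2}$ yields the claimed identity $U^{\dagger}\q_k=-\frac{2}{\theta_k}\q_k$.

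There is no serious obstacle here; the only point requiring care is the manipulation of $U^{\dagger}$. In particular the eigenvalues $\theta_k$ are not all distinct (indeed $\theta_k=\theta_{n-1-k}$), so one cannot simply read off $U^{\dagger}\q_k$ from a naive eigenbasis argument. Routing the conclusion through the identity $U^{\dagger}U\q_k=\q_k$, valid precisely because $\q_k$ lies in the range of the Hermitian matrix $U$, sidesteps this issue entirely and avoids any appeal to the full spectral decomposition of $U$.
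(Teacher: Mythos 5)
Your proof is correct, but it takes a genuinely different route from the paper. The paper derives the identity by first invoking the formula in (K4) to write $U^{\dagger}=-2D^{\dagger}+\tfrac{2}{\1'D^{\dagger}\1}(D^{\dagger}\1)(D^{\dagger}\1)'$, then substituting the full spectral decomposition of $D^{\dagger}$ together with Lemma \ref{d+}, and finally applying the resulting expression to $\q_k$ using the orthogonality relations of Theorem \ref{ee}. You instead argue directly at the level of $U$: from $P\q_k=\q_k$ (a consequence of $\langle\1,q_k\rangle=0$) and $D\q_k=\theta_k\q_k$ you get $U\q_k=-\tfrac{\theta_k}{2}\q_k$ with $-\tfrac{\theta_k}{2}\neq 0$, so $\q_k$ lies in the range of the Hermitian matrix $U$, and the projection identity $U^{\dagger}U\q_k=\q_k$ yields the claim. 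This is more elementary and more self-contained -- it needs neither (K4) nor Lemma \ref{d+} nor the spectral decomposition of $D^{\dagger}$, only Theorem \ref{ee} and a standard property of the Moore--Penrose inverse. What the paper's heavier route buys is an explicit additive expression for $U^{\dagger}$ that feeds directly into the subsequent computation of $L_{even}$ (Lemma \ref{specU}), whereas your argument establishes only the action of $U^{\dagger}$ on the $\q_k$; that is exactly what the lemma asserts, so this is not a defect. One small remark: your closing comment that repeated eigenvalues $\theta_k=\theta_{n-1-k}$ would obstruct a ``naive eigenbasis argument'' overstates the difficulty -- since the $\q_k$ are mutually orthogonal by Theorem \ref{ee}(ii), the spectral-decomposition route is also sound even with multiplicities -- but your projection argument is a clean way to avoid the issue altogether.
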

\begin{proof}
By the result in (K4),
\begin{equation*}
U^{\dagger}=-2D^{\dagger}+2\frac{ (D^\dagger\1)(D^\dagger\1)'}{(\1'D^\dagger\1)}.
\end{equation*}
Using (\ref{a2}) and Lemma \ref{d+},
\begin{equation*}
U^{\dagger}=\sum_{j=1}^{2}{\frac{-2}{\lambda_{j}}  {\x_{j}}  {\x_{j}} ^{'}}+                   \sum_{k=1}^{n-2}  {\frac{-2}{\theta_{k}} {\q_{k}}{\q_{k}}^{\ast}} +  ({n-1})(\sum_{j=1}^{2}   {a_j}{\frac{\x_{j}}{\lambda_{j}})(\sum_{j=1}^{2}   {a_j}{\frac{\x_{j}}{\lambda_{j}}  } })'.
\end{equation*}
 By Theorem \ref{ee}, 
 $\langle \x_{j},\q_{k}\rangle=0$ for all $j$ and $k$ and $\langle\q_{i_1},\q_{i_2}\rangle =0$ for all $i_1 \neq i_2$. Hence,
 \[ U^{\dagger} { \q_k} = -\frac{2}{\theta_k}{\q_k}.\]   
 The proof is complete.
\end{proof}

As $\rank(D)=n$, $\rank(U)=n-1$. In Lemma \ref{qk}, we have computed $n-2$
non-zero eigenvalues and the corresponding eigenvectors of $U$. In the next result, we compute the 
remaining non-zero eigenvalue and the corresponding eigenvector of $U$.
\begin{lemma}\label{y}
Let $y=\large(1,\frac{n-2}{3(n-1)} \1_{n-1}',-\frac{n+1}{3(n-1)}\1_{n-1}'\large )^{'}$. Then,
 \[U^\dagger y=\frac{2n-1}{n+4}y.\]
\end{lemma}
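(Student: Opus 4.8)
The plan is to sidestep the spectral expansion of $U^\dagger$ and instead show directly that $y$ is an eigenvector of $U=-\frac{1}{2}PDP$ itself, for the reciprocal eigenvalue $\frac{n+4}{2n-1}$; the statement for $U^\dagger$ then follows at once from the symmetry of $U$. The first observation is that $\langle \1,y\rangle=0$: summing the coordinates gives $1+(n-1)\frac{n-2}{3(n-1)}-(n-1)\frac{n+1}{3(n-1)}=1+\frac{(n-2)-(n+1)}{3}=0$. Hence $Py=y$, so $Uy=-\frac{1}{2}PDy$, and everything reduces to analysing $Dy$ and then applying $P$.

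Next I would compute $Dy$ from the block form (\ref{dgn}), using the row-sum identities $(J_{n-1}-I_{n-1})\1_{n-1}=(n-2)\1_{n-1}$, $S\1_{n-1}=S'\1_{n-1}=(3n-7)\1_{n-1}$ (valid since $S$ is circulant), and $T\1_{n-1}=4(n-3)\1_{n-1}$ coming from Lemma \ref{Z2}. Because $y$ is constant on each of the three blocks, each block of $Dy$ is again a scalar multiple of the corresponding all-ones vector, and a short computation yields
\[Dy=\frac{-(n+4)}{3(n-1)}\big((n-1),\,(n-3)\1_{n-1}',\,(n-5)\1_{n-1}'\big)'.\]
Writing $z:=\big((n-1),(n-3)\1_{n-1}',(n-5)\1_{n-1}'\big)'$, I would then apply $P=I-\frac{J}{2n-1}$, using $\langle\1,z\rangle=(n-1)(2n-7)$ and $\|\1\|^2=2n-1$, and simplify the three block coefficients to $\frac{6(n-1)}{2n-1}$, $\frac{2(n-2)}{2n-1}$, $\frac{-2(n+1)}{2n-1}$. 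Since $\big(3(n-1),(n-2)\1_{n-1}',-(n+1)\1_{n-1}'\big)'=3(n-1)y$, this collapses to $Pz=\frac{6(n-1)}{2n-1}y$, and therefore $Uy=-\frac{1}{2}PDy=-\frac{1}{2}\cdot\frac{-(n+4)}{3(n-1)}\cdot\frac{6(n-1)}{2n-1}\,y=\frac{n+4}{2n-1}y$.

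Finally, since $U=-\frac{1}{2}PDP$ is symmetric and $y$ is an eigenvector of $U$ with the nonzero eigenvalue $\frac{n+4}{2n-1}$, the vector $y$ lies in the range of $U$ and is an eigenvector of $U^\dagger$ for the reciprocal eigenvalue; that is, $U^\dagger y=\frac{2n-1}{n+4}y$, as claimed. The substantive work is entirely the bookkeeping in the two block computations, and I expect the main (purely arithmetic) obstacle to be verifying the factorisations $-n^2-n+12=-(n+4)(n-3)$ and $-n^2+n+20=-(n+4)(n-5)$ in the second and third blocks of $Dy$: it is precisely the emergence of the common factor $(n+4)$ in every block that forces $y$ to be an eigenvector and produces the clean eigenvalue.
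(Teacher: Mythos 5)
Your proposal is correct, and it is essentially the paper's own argument: both proofs compute $Dy$ blockwise from the circulant row sums, use $\langle \1,y\rangle=0$ to get $Py=y$, and then project to identify $y$ as an eigenvector of $PDP$ (the paper merely organizes the projection step by verifying $Dy+\tfrac{2(n+4)}{2n-1}y\in\operatorname{span}(\1)$ rather than computing $PDy$ directly, which is the same arithmetic). Your block coefficients, the factorizations $-(n+4)(n-3)$ and $-(n+4)(n-5)$, and the final passage from $U$ to $U^\dagger$ all check out.
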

\begin{proof}
Define \[r:=\frac{n-2}{3(n-1)},~~~s:=-\frac{n+1}{3(n-1)},~~\gamma:= \frac{2(n+4)}{(2n-1)}.\] We now show that
\[U^{\dag}y=\frac{2n-1}{n+4}y, \] where
\[y=(1,r \1_{n-1}',s\1_{n-1}')^{'}.\]

 Recall that
 \begin{equation*} 
D=\left[
\begin{array}{ccccccccc}
0 & \1_{n-1}' & 2 \1_{n-1}'\\
\1_{n-1} & 2(J_{n-1}-I_{n-1}) & S\\
2 \1_{n-1} & S' & T
\end{array}
\right],
\end{equation*}
where $S=\cir(1,\underbrace{3,\dotsc,3}_{n-3},1)$ and $T=\cir(0,2,\underbrace {4,\dotsc,4}_{n-4},2)$.

Since \[J_{n-1}\1_{n-1}=(n-1)\1_{n-1},~~S\1_{n-1}=(3n-7)\1_{n-1}~\mbox{and}~ T\1_{n-1}=4(n-3)\1_{n-1},\]
by a direct computation, we get
\[Dy=\left[ \begin{array}{cccc}
(r+2s)(n-1)\\\
\1_{n-1}+2r(n-2)\1_{n-1}+s(3n-7)\1_{n-1}\\2\1_{n-1}+r(3n-7)\1_{n-1}+4s(n-3)\1_{n-1}
\end{array}\right],\]
and
\[ \gamma y   = \frac{2(n+4)}{(2n-1)}                               \left[ \begin{array}{cccc}
1\\
r\1_{n-1}\\
s\1_{n-1}
\end{array}\right]. \]
So,
\begin{equation} \label{y1}
Dy+\gamma y=  \frac{(n+4)(-2n+7)}{3(2n-1)}\1.
\end{equation}
Since \[\langle\1,y\rangle=1+\frac{(n-2)(n-1)}{3(n-1)}-\frac{(n+1)(n-1)}{3(n-1)}=0,\] 
and $P$ is the orthogonal projection onto $\1^{\perp}$, we get
\begin{equation}\label{y2222}
Py=y.
\end{equation} 
In view of (\ref{y1}) and (\ref{y2222}), 
\begin{equation*}
PDy+\gamma Py=PDPy+\gamma y=0.
\end{equation*} 
This gives 
\begin{equation*}\label{y4}
PDP y=-\gamma y.
\end{equation*} 

Since $U=-\frac{1}{2}PDP,$
\[U^\dagger y=\frac{2}{\gamma}y=\frac{2n-1}{n+4}y.\] The proof is complete.
 \end{proof}

\begin{lemma} \label{specU}
\[ U^{\dagger}=A
-\sum_{k=1}^{n-2}  {(\frac{2}{\theta_{k}}) }   \frac{ {q_{k}}  {q_{k}} ^{\ast }   }{ { 
{\langle{q_{k}}},{q_{k}}}\rangle},\]
where $A$ is defined in (\ref{A}).
\end{lemma}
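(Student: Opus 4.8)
The plan is to build the full spectral decomposition of $U^\dagger$ from the eigenpairs already in hand. Since $U=-\frac{1}{2}PDP$ is symmetric, so is $U^\dagger$, and $\rank(U^\dagger)=\rank(U)=n-1$. Lemma \ref{qk} supplies $n-2$ eigenvectors $q_1,\dotsc,q_{n-2}$ with eigenvalues $-2/\theta_k$, and Lemma \ref{y} supplies one further eigenvector $y$ with eigenvalue $(2n-1)/(n+4)$. All of these eigenvalues are nonzero, so each of the corresponding vectors automatically lies in the range of $U^\dagger$, which coincides with the range of $U$.

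First I would verify that these $n-1$ vectors are mutually orthogonal. The $q_k$ are pairwise orthogonal by Theorem \ref{ee}(ii). For the orthogonality of $y$ against each $q_k$, write $y=(1,r\1_{n-1}',s\1_{n-1}')'$ and $q_k=(0,u_k',v_k')'$; then $\langle y,q_k\rangle=r\langle\1_{n-1},u_k\rangle+s\langle\1_{n-1},v_k\rangle$, and both inner products vanish, the first by (\ref{1'uk=0}) and the second by (K3). A collection of $n-1$ mutually orthogonal eigenvectors with nonzero eigenvalues spans the $(n-1)$-dimensional range of $U^\dagger$, so the spectral theorem yields
\[
U^\dagger=\frac{2n-1}{n+4}\frac{yy'}{\langle y,y\rangle}+\sum_{k=1}^{n-2}\Bigl(-\frac{2}{\theta_k}\Bigr)\frac{q_k q_k^{\ast}}{\langle q_k,q_k\rangle}.
\]
Comparing with the claimed identity, it remains only to show that $A=\frac{2n-1}{n+4}\frac{yy'}{\langle y,y\rangle}$.

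For this I would compute $\langle y,y\rangle=1+(r^2+s^2)(n-1)$; substituting $r=\frac{n-2}{3(n-1)}$ and $s=-\frac{n+1}{3(n-1)}$ and using $(n-2)^2+(n+1)^2=2n^2-2n+5$ gives $\langle y,y\rangle=\frac{2n^2+7n-4}{9(n-1)}=\frac{(2n-1)(n+4)}{9(n-1)}$. Hence the scalar prefactor collapses to $\frac{2n-1}{n+4}\cdot\frac{1}{\langle y,y\rangle}=\frac{9(n-1)}{(n+4)^2}$, which is precisely the scalar in front of the bracketed matrix in (\ref{A}). Since the outer product $yy'$ has block entries $1$, $r$, $s$ (in the $\1_{n-1}$-blocks) and $r^2$, $rs$, $s^2$ (in the $J_{n-1}$-blocks), these match the entries of the bracketed matrix in (\ref{A}) block by block, completing the identification.

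The only delicate point is the factoring $2n^2+7n-4=(2n-1)(n+4)$: this is exactly what makes the eigenvalue $(2n-1)/(n+4)$ of Lemma \ref{y} cancel against the normalization $\langle y,y\rangle$ to reproduce the clean scalar $\frac{9(n-1)}{(n+4)^2}$ appearing in $A$. Beyond this, the remaining work is routine block-by-block bookkeeping, and no genuine obstacle arises once the orthogonality of $y$ to the $q_k$ is established; the substance of the lemma is simply organizing the already-computed eigenpairs into a spectral sum and recognizing the leading rank-one term as $A$.
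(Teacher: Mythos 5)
Your proposal is correct and follows essentially the same route as the paper: expand $U^\dagger$ spectrally using the eigenpairs from Lemmas \ref{qk} and \ref{y}, compute $\|y\|^2=\frac{(2n-1)(n+4)}{9(n-1)}$ so that the prefactor collapses to $\frac{9(n-1)}{(n+4)^2}$, and identify the rank-one term with $A$. Your explicit check that $y$ is orthogonal to each $q_k$ is a small point of extra care that the paper leaves implicit; otherwise the two arguments coincide.
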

\begin{proof}
As in Lemma $\ref{y}$, let \[y=(1,\frac{(n-2)}{3(n-1)}\1_{n-1}', \frac{-(n+1)}{3(n-1)}\1_{n-1}')^{'}.\] Now, 
\[  \|y\|^2=\langle y,y\rangle=\frac{(2n-1)(n+4)}{9(n-1)}.\]
 From Lemma \ref{qk}, for each $k=1,\dotsc,n-2$,
 \[U^{\dag} \q_{k}=-\frac{2}{\theta_k} \q_{k},  \]
 where
 $\theta_k={-8\cos^{2}(\frac{\pi k}{n-1})-2}$.
 From Lemma \ref{y}, 
\[U^{\dag}y= \frac{(2n-1)}{(n+4)}y. \] 
 Define
 \[\la:=\frac{2n-1}{n+4}. \]
 Then,
 \[\frac{\la}{\|y\|^{2}}=\frac{9(n-1)}{(n+4)^{2}}. \]
 Since $\rank(U)=n-1$, by
spectral decomposition, 
\begin{equation}\label{l1111}
\begin{aligned}
 U^{\dagger}&={\lambda}\frac{yy'}{{\|y\|}^2}-\sum_{k=1}^{n-2} {\frac{2}{\theta_{k}}  {\q_{k}}  {\q_{k}}^{\ast}} \\
 &=\frac{9(n-1)}{(n+4)^2}yy'-\sum_{k=1}^{n-2}  {(\frac{2}{\theta_{k}}) }   \frac{ {q_{k}}  {q_{k}} ^{\ast }   }{ { 
{\langle{q_{k}}},{q_{k}}}\rangle}. \\
 \end{aligned}
\end{equation}
Direct multiplication gives
\begin{equation}\label{av2222}
yy'=\left[ \begin{array}{cccc}
1 & \frac{(n-2)}{3(n-1)}\1_{n-1}' &\frac{-(n+1)}{3(n-1)}\1_{n-1}' \\
\frac{(n-2)}{3(n-1)} \1_{n-1}& \frac{(n-2)^2}{9(n-1)^2}J_{n-1} &\frac{-(n-2)(n+1)}{9(n-1)^2}J_{n-1} \\
\frac{-(n+1)}{3(n-1)}\1_{n-1} & \frac{-(n-2)(n+1)}{9(n-1)^2}J_{n-1} & \frac{(n+1)^2}{9(n-1)^2}J_{n-1}
\end{array}\right].
\end{equation}
We note from (\ref{A}) that \[A=\frac{9(n-1)}{(n+4)^2}yy'.\]
So,
\[ U^{\dagger}=A
-\sum_{k=1}^{n-2}  {(\frac{2}{\theta_{k}}) }   \frac{ {q_{k}}  {q_{k}} ^{\ast }   }{ { 
{\langle{q_{k}}},{q_{k}}}\rangle}.\]
The proof is complete.
\end{proof}

\begin{lemma}\label{qkvalue} 
Fix $k \in \{1,\dotsc,n-2\}$. Then,
${-(\frac{2}{\theta_{k}}) }\frac{ {q_{k}}  {q_{k}} ^{\ast }   }{ { 
{\langle{q_{k}}},{q_{k}}}\rangle}$ is equal to   \[
\frac{1}{(n-1)(2\cos(\frac{\pi k}{n-1})+ \frac{1}{                                                        2\cos(\frac{\pi k}{n-1})})^{2}}\left[ \begin{array}{cccc}
 0&\0&\0\\
\0& 
 \frac{1}{4\cos^{2}(\frac{\pi k}{n-1})} v_k v_k^{\ast}
 &
  \frac{1+\omega^{-k}}{                                                        4\cos^{2}(\frac{\pi k}{n-1})}v_k v_k^{\ast}\\
\0& \frac{1+\omega^{k}}{                                                        4\cos^{2}(\frac{\pi k}{n-1})}v_k v_k^{\ast}
&v_k v_k^{\ast} \end{array}\right],\]
where \[v_k=(1,\omega^{k},\dotsc,\omega^{(n-2)k})'.\]
\end{lemma}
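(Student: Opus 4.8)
The plan is to compute the rank-one block directly from the explicit form of $q_k$ given in Theorem \ref{ee}, reducing every block to a scalar multiple of $v_kv_k^{\ast}$ by means of the eigen-relation $Sv_k=-2(1+\omega^{-k})v_k$ (from Lemma \ref{Z2} and (K3)) together with a single doubling identity. Throughout I would write $\phi_k=\cos(\frac{\pi k}{n-1})$ as in the definition of the special Laplacian. First I would substitute $Sv_k=-2(1+\omega^{-k})v_k$ into $u_k=\frac{-1}{8\phi_k^2}Sv_k$ to obtain the clean relation $u_k=\frac{1+\omega^{-k}}{4\phi_k^2}\,v_k$. This is the key simplification: it exhibits $u_k$ as a scalar multiple of $v_k$, so the whole outer product collapses.

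Next, writing $q_k=(0,u_k',v_k')'$ and forming $q_kq_k^{\ast}$ yields a $3\times 3$ block matrix whose first row and column vanish and whose remaining four blocks are $u_ku_k^{\ast}$, $u_kv_k^{\ast}$, $v_ku_k^{\ast}$, and $v_kv_k^{\ast}$. Using $u_k=\frac{1+\omega^{-k}}{4\phi_k^2}v_k$ and $\overline{\omega^{-k}}=\omega^{k}$, these equal $\frac{(1+\omega^{-k})(1+\omega^{k})}{16\phi_k^4}v_kv_k^{\ast}$, $\frac{1+\omega^{-k}}{4\phi_k^2}v_kv_k^{\ast}$, $\frac{1+\omega^{k}}{4\phi_k^2}v_kv_k^{\ast}$, and $v_kv_k^{\ast}$ respectively. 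The one identity I would invoke is
\[
(1+\omega^{-k})(1+\omega^{k})=2+\omega^{k}+\omega^{-k}=2+2\cos\!\Big(\tfrac{2\pi k}{n-1}\Big)=4\cos^2\!\Big(\tfrac{\pi k}{n-1}\Big)=4\phi_k^2,
\]
the last step being $1+\cos(2\theta)=2\cos^2\theta$. This collapses the $(2,2)$-block coefficient to $\frac{1}{4\phi_k^2}$ and matches the four blocks displayed in the statement up to the common scalar.

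It then remains only to evaluate the overall factor $-\frac{2}{\theta_k}\cdot\frac{1}{\langle q_k,q_k\rangle}$. Here $\langle q_k,q_k\rangle=\|u_k\|^2+\|v_k\|^2$, where $\|v_k\|^2=n-1$ by the orthogonality/normalization of the circulant eigenvectors in (K3), and $\|u_k\|^2=\frac{(1+\omega^{-k})(1+\omega^{k})}{16\phi_k^4}\|v_k\|^2=\frac{n-1}{4\phi_k^2}$ by the same identity, so $\langle q_k,q_k\rangle=(n-1)\big(1+\frac{1}{4\phi_k^2}\big)$. From $\theta_k=-8\phi_k^2-2=-2(1+4\phi_k^2)$ I get $-\frac{2}{\theta_k}=\frac{1}{1+4\phi_k^2}$. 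Multiplying and using $1+\frac{1}{4\phi_k^2}=\frac{1+4\phi_k^2}{4\phi_k^2}$ gives the coefficient $\frac{4\phi_k^2}{(n-1)(1+4\phi_k^2)^2}$, which is exactly $\frac{1}{(n-1)(2\phi_k+\frac{1}{2\phi_k})^2}$ after noting $(2\phi_k+\frac{1}{2\phi_k})^2=\frac{(1+4\phi_k^2)^2}{4\phi_k^2}$. Assembling the common scalar with the block matrix produces the asserted formula. The whole argument is a direct computation; the only point requiring care is correctly tracking the complex conjugate in the $(3,2)$-block and applying the doubling identity consistently, so I expect no genuine obstacle beyond this bookkeeping.
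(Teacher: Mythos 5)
Your proposal is correct and follows essentially the same route as the paper: both rest on the eigen-relation $Sv_k=-2(1+\omega^{-k})v_k$, the identity $(1+\omega^{-k})(1+\omega^{k})=4\cos^2(\frac{\pi k}{n-1})$, and $\langle v_k,v_k\rangle=n-1$, differing only in that you first collapse $u_k$ to the scalar multiple $\frac{1+\omega^{-k}}{4\phi_k^2}v_k$ before forming the outer product, while the paper substitutes $u_k=\frac{-1}{8\phi_k^2}Sv_k$ into the blocks and simplifies afterwards. All your intermediate scalars (the block coefficients, $\langle q_k,q_k\rangle=(n-1)(1+\frac{1}{4\phi_k^2})$, and the prefactor $\frac{4\phi_k^2}{(n-1)(1+4\phi_k^2)^2}$) check out against the paper's computation.
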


\begin{proof}
Since  $q_k=(0,u_k',v_k')'$ and  $\theta_k={-8\cos^{2}(\frac{\pi k}{n-1})-2}$,
\[{-\frac{\theta_{k}}{2} }\langle{{q_{k}}}  ,    {q_{k}}\rangle= {(4\cos^{2}(\frac{\pi k}{n-1})+1)}(\langle u_{k},u_{k}\rangle+\langle v_{k},v_{k}\rangle)  . \]
Substituting $u_k=(  \frac{-1}{                                                        8\cos^{2}(\frac{\pi k}{n-1})})Sv_{k}$ in the above equation, 
\[{-\frac{\theta_{k}}{2} }\langle{{q_{k}}}      ,{q_{k}}\rangle=(4\cos^{2}(\frac{\pi k}{n-1})+1)(  \frac{1}{                                                      64\cos^{4}(\frac{\pi k}{n-1})}  \langle Sv_{k}, Sv_{k}\rangle+\langle v_{k},v_{k} \rangle).\]

By  Lemma $\ref{Z2}$ and (K3),
 \[Sv_k=-2(1+\omega^{-k})v_k.\]  
 Hence
\[{-\frac{\theta_{k}}{2} }\langle{{q_{k}}},      {q_{k}}\rangle= (4\cos^{2}(\frac{\pi k}{n-1})+1)(  \frac{(2+\omega^{k}+\omega^{-k}  )}{                                                        16\cos^{4}(\frac{\pi k}{n-1})  }\langle v_{k},v_{k}\rangle+\langle v_{k},v_{k} )\rangle.\]

By the identity $\omega^{k}+\omega^{-k}=2\cos(\frac{2 \pi k} {n-1})$, 
\[{-\frac{\theta_{k}}{2} }\langle{{q_{k}}},      {q_{k}}\rangle= (4\cos^{2}(\frac{\pi k}{n-1})+1)(  \frac{(1+\cos(\frac{2 \pi k}{n-1} ))}{                                                        8\cos^{4}(\frac{\pi k}{n-1})  }\langle v_{k},v_{k}\rangle+\langle v_{k},v_{k} )\rangle   ).\]

Since $v_k=(1,\omega^k,\dotsc,\omega^{(n-2)k})'$, \[\langle v_{k},v_{k}\rangle=n-1.\]
 Using this and the identity, 
   \[1+\cos(\frac{2\pi k}{n-1})=2\cos^{2}(\frac{\pi k}{n-1}),\]
  we get

 \begin{equation}\label{u2}
  \begin{aligned}
  {-\frac{\theta_{k}}{2} }\langle q_k, q_k \rangle&=(4\cos^{2}(\frac{\pi k}{n-1})+1)( \frac{\cos^{2}(\frac{\pi k}{n-1})}{                                                        4\cos^{4}(\frac{\pi k}{n-1})  }(n-1)+(n-1))
  \\
  &=(n-1)(2\cos(\frac{\pi k}{n-1})+ \frac{1}{                                                        2\cos(\frac{\pi k}{n-1})})^{2}.
   \end{aligned}
   \end{equation}

Since $q_k=(0,u_k',v_k')'$,
\[{{q_{k}}}  {q_{k}}^{\ast} =
\left[ \begin{array}{cccc} 
 0&\0&\0\\
\0&u_{k}u_{k}^{\ast} & u_{k}v_{k}^{\ast}\\
\0& v_{k}u_{k}^{\ast}& v_{k}v_{k}^{\ast}
 \end{array}\right].\]
 
Substituting $u_k=(  \frac{-1}{                                                        8\cos^{2}(\frac{\pi k}{n-1})})Sv_{k} $ and using $Sv_k=-2(1+\omega^{-k})v_k$, we get 

   \[
{{q_{k}}}  {q_{k}}^{\ast}
=\left[ \begin{array}{cccc}
 0&\0&\0\\
\0& 
 \frac{4(2+\omega^{-k}+\omega^{k})}{                                                        64\cos^{4}(\frac{\pi k}{n-1})  } v_{k}v_{k}^{\ast}
 &
  \frac{1+\omega^{-k}}{                                                        4\cos^{2}(\frac{\pi k}{n-1})  }v_{k}v_{k}^{\ast}\\
\0& \frac{1+\omega^{k}}{                                                        4\cos^{2}(\frac{\pi k}{n-1})  }v_{k}v_{k}^{\ast}
&v_{k}v_{k}^{\ast} \end{array}\right].
\]
 
As $\omega^{k}+\omega^{-k}=2\cos(\frac{2 \pi k} {n-1})$ and $1+\cos(\frac{2\pi k}{n-1})=2\cos^{2}(\frac{\pi k}{n-1})$, 

\begin{equation}\label{u1}
{{q_{k}}}  {q_{k}}^{\ast}
=\left[ \begin{array}{cccc}
 0&\0&\0\\
\0& 
 \frac{1}{4\cos^{2}(\frac{\pi k}{n-1})}v_k v_k^{\ast}
 &
  \frac{1+\omega^{-k}}{                                                        4\cos^{2}(\frac{\pi k}{n-1})}v_k v_k^{\ast}\\
\0& \frac{1+\omega^{k}}{                                                        4\cos^{2}(\frac{\pi k}{n-1})}v_k v_k^{\ast}
&v_k v_k^{\ast} \end{array}\right].
\end{equation}
Thus, from (\ref{u2}) and (\ref{u1}), 
$${-(\frac{2}{\theta_{k}}) }\frac{ {q_{k}}  {q_{k}} ^{\ast }   }{ { 
{\langle{q_{k}}},{q_{k}}}\rangle}   $$ is equal to
\begin{equation*}\label{u3}
\frac{1}{(n-1)(2\cos(\frac{\pi k}{n-1})+ \frac{1}{                                                        2\cos(\frac{\pi k}{n-1})})^{2}}\left[ \begin{array}{cccc}
 0&\0&\0\\
\0& 
 \frac{1}{4\cos^{2}(\frac{\pi k}{n-1})} v_k v_k^{\ast}
 &
  \frac{1+\omega^{-k}}{                                                        4\cos^{2}(\frac{\pi k}{n-1})}v_k v_k^{\ast}\\
\0& \frac{1+\omega^{k}}{                                                        4\cos^{2}(\frac{\pi k}{n-1})}v_k v_k^{\ast}
&v_k v_k^{\ast} \end{array}\right].
\end{equation*}
The proof is complete.
\end{proof}
\begin{lemma}\label{dbk}
Define
\[\Delta_k:=-(\frac{2}{\theta_{k}})\re\frac{ {q_{k}}  {q_{k}} ^{\ast }   }{ { 
{\langle{q_{k}}},{q_{k}}}\rangle}    ~~~~~k=1,\dotsc,n-2.
\]
Then,
\[\sum_{k=1}^{n-2} \Delta_k=\sum_{k=1}^{\frac{n-2}{2}}B_k,\]
where $B_k$ are defined in (\ref{bkdef}).
\end{lemma}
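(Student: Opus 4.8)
The plan is to make the matrix $\Delta_k$ completely explicit by taking the real part of the expression for $-\frac{2}{\theta_k}\frac{q_k q_k^*}{\langle q_k,q_k\rangle}$ furnished by Lemma \ref{qkvalue}, and then to exploit the symmetry $k \mapsto n-1-k$ to fold the full sum over $k=1,\dots,n-2$ onto a sum over $k=1,\dots,\frac{n-2}{2}$. Since $\theta_k$ and $\langle q_k,q_k\rangle$ are real, $\Delta_k$ is simply the entrywise real part of the matrix displayed in Lemma \ref{qkvalue}, so the first task is purely to identify the real parts of the blocks built from $v_k v_k^\ast$.

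First I would compute those real parts. Writing $v_k=(1,\omega^k,\dots,\omega^{(n-2)k})'$, the $(p,q)$ entry of $v_k v_k^\ast$ (with $p,q\in\{0,\dots,n-2\}$) is $\omega^{(p-q)k}$, whose real part is $\cos(\frac{2\pi(p-q)k}{n-1})$; this is exactly the circulant $C_k$, so $\re(v_k v_k^\ast)=C_k$. For the off-diagonal block, $(1+\omega^{-k})v_k v_k^\ast$ has $(p,q)$ entry $\omega^{(p-q)k}+\omega^{(p-q-1)k}$, with real part $\cos(\frac{2\pi(p-q)k}{n-1})+\cos(\frac{2\pi(p-q-1)k}{n-1})$; recognizing the second circulant as $\widetilde{C_k}$ gives $\re((1+\omega^{-k})v_k v_k^\ast)=C_k+\widetilde{C_k}$, and the conjugate computation gives $\re((1+\omega^{k})v_k v_k^\ast)=C_k+\widetilde{C_k}'$. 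Substituting these three identities into Lemma \ref{qkvalue} shows that $\Delta_k$ agrees blockwise with the matrix in (\ref{bkdef}); the only discrepancy is the prefactor, so that $\Delta_k=\frac{1}{2}B_k$ exactly.

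Next I would establish the symmetry $\Delta_{n-1-k}=\Delta_k$. Two facts suffice. Since $\phi_{n-1-k}=\cos(\pi-\frac{\pi k}{n-1})=-\phi_k$, we have $\phi_{n-1-k}^2=\phi_k^2$ and $(2\phi_{n-1-k}+\frac{1}{2\phi_{n-1-k}})^2=(2\phi_k+\frac{1}{2\phi_k})^2$, so the scalar prefactor is invariant; likewise $\theta_{n-1-k}=\theta_k$. And because $\omega^{n-1}=1$, we get $v_{n-1-k}=\overline{v_k}$, hence $v_{n-1-k}v_{n-1-k}^\ast=\overline{v_k v_k^\ast}$ and $\re(v_{n-1-k}v_{n-1-k}^\ast)=\re(v_k v_k^\ast)$, i.e. $C_{n-1-k}=C_k$; the same substitution shows $\widetilde{C_{n-1-k}}=\widetilde{C_k}$. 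As every ingredient of $\Delta_k$ is invariant under $k\mapsto n-1-k$, so is $\Delta_k$ itself.

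Finally I would assemble the sum. Because $n$ is even, $n-1$ is odd, so $k\mapsto n-1-k$ is a fixed-point-free involution of $\{1,\dots,n-2\}$ (there is no $k$ with $2k=n-1$), partitioning the index set into the $\frac{n-2}{2}$ orbits $\{k,n-1-k\}$ for $k=1,\dots,\frac{n-2}{2}$. On each orbit $\Delta_k+\Delta_{n-1-k}=2\Delta_k=B_k$, and summing over all orbits yields $\sum_{k=1}^{n-2}\Delta_k=\sum_{k=1}^{(n-2)/2}B_k$. The step needing the most care is the blockwise identification in the first paragraph, since it is precisely there that the factor of two in the definition of $B_k$ is reconciled with the involution that halves the range of summation; the evenness of $n$ enters only through the absence of a fixed point, which is exactly why the odd case must be handled separately.
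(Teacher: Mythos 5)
Your proposal is correct and follows essentially the same route as the paper: you identify the entrywise real parts $\re(v_kv_k^\ast)=C_k$, $\re((1+\omega^{-k})v_kv_k^\ast)=C_k+\widetilde{C_k}$, $\re((1+\omega^{k})v_kv_k^\ast)=C_k+\widetilde{C_k}'$ to get $\Delta_k=\frac{1}{2}B_k$, and then use the invariance of all ingredients under $k\mapsto n-1-k$ (fixed-point-free since $n-1$ is odd) to fold the sum onto $k=1,\dotsc,\frac{n-2}{2}$. The only cosmetic differences are that you obtain $C_{n-1-k}=C_k$ via $v_{n-1-k}=\overline{v_k}$ and the lower-left block via conjugate transposition rather than the paper's entrywise verifications, which is a slightly cleaner packaging of the same computation.
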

\begin{proof}
By $(\ref{bkdef})$, if $k \in \{1,\dotsc,n-2\}$, then
\[B_{k}:=  \frac{2}{(n-1)(2\phi_k+ \frac{1}{                                                        2\phi_k})^{2}} 
\left[
\begin{array}{ccc} 
 0&\0&\0\\
\0& \frac{C_k}{4\phi_{k  }^{2}}& \frac{C_k+\widetilde{C_{k}}}{ 4\phi_k^{2}  }\\ 
\0 & \frac{C_k+\widetilde{C_{k}}^{'}}{ 4\phi_{k}^{2}} & C_k \\
\end{array}
\right],
\]
where
\[\phi_k= \cos(\frac{\pi k}{n-1})~~~k=1,\dotsc,{n-2};\]
\[C_{k}=\cir(1,\cos(\frac{2\pi k }{n-1}),\dotsc,\cos(\frac{2 \pi (n-2)  k }{n-1}));~~\mbox{and}\]
 \[\widetilde{C_{k}}= \cir(\cos(\frac{2 \pi k}{n-1}),\dotsc,\cos(\frac{2 \pi (n-2) k }{n-1}),1).\] 
 By a direct verification, we see that $(r,s)^{\rm th}$ entries of $C_k$
 and $\widetilde{C_{k}}$ are 
 \begin{equation}\label{t1}
C_{k}(r,s)= \cos{(\frac{2\pi}{n-1}(r-s)k)} ~~\mbox{and}~~ \widetilde{C_{k}}(r,s)=\cos{(\frac{2\pi}{n-1}(r-s-1)k)}.
\end{equation}
Define $T_k:= v_k v_k^{\ast}.$
The $r^{\rm th}$ entry of $v_k$ is 
 $v_{k}(r)=\omega^{k(r-1)}$.
Hence, the $(r,s)^{\rm th}$ entry of $T_{k}$ is given by
\begin{equation}\label{tk}
    T_{k}(r,s)=v_{k}(r)v_{k}^{\ast}(s)=\omega^{k(r-1)}\omega^{-k(s-1)}= \omega^{(r-s)k}.
\end{equation} 
We note that
\[\re\omega^{(r-s)k}=\cos{(\frac{2\pi}{n-1}(r-s)k)}.\] 

So,
\[\re T_k(r,s)=\re \omega^{(r-s)k}=\cos{(\frac{2\pi}{n-1}(r-s)k)}.\]

In view of the first equation in (\ref{t1}), it now follows that
\begin{equation}\label{ck1}
\re T_k =C_k.
\end{equation}


Using (\ref{tk}),
\[(1+\omega^{-k})T_k(r,s)=(1+\omega^{-k})\omega^{(r-s)k}=\omega^{(r-s)k}+\omega^{(r-s-1)k}.\]
Hence 
\begin{equation*}
\begin{aligned}
\re{\large(1+\omega^{-k})T_k(r,s)\large} &=\re \omega^{(r-s)k} +\re \omega^{(r-s-1)k}  \\
&=\cos{(\frac{2\pi}{n-1}(r-s)k)}+
\cos{(\frac{2\pi}{n-1}(r-s-1)k)}.
\end{aligned}
\end{equation*}
In view of (\ref{t1}), 
\begin{equation}\label{ck2}
 \re(1+\omega^{k})T_k  = C_k+\widetilde{C_k}.
\end{equation}
 Let $F_k$ be the $(n-1) \times (n-1)$ matrix with $(r,s)^{\rm th}$ entry equal to
 \begin{equation}\label{f1}
 F_k (r,s)=\cos{(\frac{2\pi}{n-1}(r-s+1)k)}.
 \end{equation}
Again using (\ref{tk}), we see that
\[(1+\omega^{k})T_k(r,s)=(1+\omega^{k})\omega^{(r-s)k}=\omega^{(r-s)k}+\omega^{(r-s+1)k}.\]

 So,
 \begin{equation*}
 \begin{aligned}
\re\large(1+\omega^{k})T_k(r,s)\large &=\re \omega^{(r-s)k} +\re \omega^{(r-s+1)k}  \\
&=\cos{(\frac{2\pi}{n-1}(r-s)k)}+
\cos{(\frac{2\pi}{n-1}(r-s+1)k)}.
\end{aligned}
\end{equation*}

 By the first equation of (\ref{t1}) and by  (\ref{f1}),
\begin{equation}\label{ck3}
 \re (1+\omega^{k})T_k   =C_k +F_k.
\end{equation}

Since \[\cos{(\frac{2\pi}{n-1}(r-s+1)k)}=\cos{(\frac{2\pi}{n-1}(s-r-1)k)},\]
from the second equation in $(\ref{t1})$, we find that
\[\widetilde{C_k}(s,r)=\cos{(\frac{2\pi}{n-1}(r-s+1)k)}. \]
By the definition of $F_k$, we now have
\[F_k(r,s)=\widetilde{C_k}(s,r).\]
So,
\[F_k= \widetilde{C_k}^{'}.\] 
Thus, (\ref{ck3}) simplifies to
\begin{equation}\label{ckt}
 \re (1+\omega^{k})T_k  =C_k +\widetilde{C_k}^{'}.
 \end{equation}
In view of Lemma \ref{qkvalue}, 
\[{-(\frac{2}{\theta_{k}}} \frac{1}{\langle q_k,q_k \rangle})q_{k}q_{k}^{*}
=\frac{1}{(n-1)(2\cos(\frac{\pi k}{n-1})+ \frac{1}{                                                        2\cos(\frac{\pi k}{n-1})})^{2}}\left[ \begin{array}{cccc}
 0&\0&\0\\
\0& 
 \frac{1}{4\cos^{2}(\frac{\pi k}{n-1})} T_k
 &
  \frac{1+\omega^{-k}}{                                                        4\cos^{2}(\frac{\pi k}{n-1})}T_k\\
\0& \frac{1+\omega^{k}}{                                                        4\cos^{2}(\frac{\pi k}{n-1})}T_k
&T_k\end{array}\right].\]
As in Section \ref{speciallaplacianeven}, let \[\phi_k= \cos(\frac{\pi k}{n-1})~~~k=1,\dotsc,{n-2}.\]  Now,
\begin{equation} \label{qkqk}
{-(\frac{2}{\theta_{k}}} \frac{1}{\langle q_k,q_k \rangle})q_{k}q_{k}^{*}
=\frac{2}{(n-1)(2\phi_k+ \frac{1}{                                                        2\phi_k})^{2}} 
\left[ \begin{array}{cccc}
 0&\0&\0\\
\0& 
 \frac{1}{4\phi_k^2} T_k
 &
  \frac{1+\omega^{-k}}{                                                        4 \phi_k^2}T_k\\
\0& \frac{1+\omega^{k}}{                                                        4 \phi_k^2  }T_k
&T_k\end{array}\right].
\end{equation}
Since
\[\Delta_k=-(\frac{2}{\theta_{k}})   \re{ }\frac{ {q_{k}}  {q_{k}} ^{\ast }   }{ { 
{\langle{q_{k}}},{q_{k}}}\rangle}   ,
\]
by using (\ref{ck1}), (\ref{ck2}) and (\ref{ckt}) in (\ref{qkqk}), we get
\begin{equation}\label{deltak}
\sum_{k=1}^{n-2} \Delta_k=\sum_{k=1}^{n-2}    \frac{1}{(n-1)(2\phi_k    + \frac{1}{                                                        2 \phi_k  })^{2}}  \left[ \begin{array}{cccc} 0&\0'&\0'\\
\0&\frac{C_k}{                                                        4\phi_k^2   }&\frac{C_k+\widetilde{C_{k}}}{                                                        4 \phi_k^2  }\\ \0&\frac{C_k+\widetilde{C_{k}}^{'}}{                                                        4 \phi_k^2     }&C_k
\end{array}\right].
\end{equation}
We recall from (\ref{bkdef}) that 
\[B_k=    \frac{2}{(n-1)(2\phi_k    + \frac{1}{                                                        2 \phi_k  })^{2}}  \left[ \begin{array}{cccc} 0&\0'&\0'\\
\0&\frac{C_k}{                                                        4\phi_k^2   }&\frac{C_k+\widetilde{C_{k}}}{                                                        4 \phi_k^2  }\\ \0&\frac{C_k+\widetilde{C_{k}}^{'}}{                                                        4 \phi_k^2     }&C_k
\end{array}\right].\]
Hence by (\ref{deltak}), 
\begin{equation}\label{dkbk}
\sum_{k=1}^{n-2} \Delta_k=\frac{1}{2}   \sum_{k=1}^{n-2}B_k.
\end{equation}
Fix $k \in \{1,\dotsc,\frac{n-2}{2}\}$.
We now claim that \[B_k=B_{n-1-k}.\] 
To prove this, we need to show that 
\[C_{n-1-k}=C_k,~~\widetilde{C}_{n-1-k}=\widetilde{C}_{k},\] \[\phi^2_{n-1-k}=\phi^2_{k}~~\mbox{and}~~(2\phi_{n-1-k}+\frac{1}{2\phi_{n-1-k}})^2=(2\phi_{k}+\frac{1}{2\phi_{k}})^2.\]
Since
\begin{align*}
C_{n-1-k}(r,s)&= \cos{(\frac{2\pi}{n-1}(r-s)(n-1-k))}\\
&=\cos{(2\pi (r-s)-\frac{2\pi}{n-1}(r-s)k)}\\&
=C_k(r,s),
\end{align*}
it follows that
$C_{n-1-k}=C_k$. 
Similarly,
\begin{align*}
\widetilde{C}_{n-1-k}(r,s)&= \cos{(\frac{2\pi}{n-1}(r-s-1)(n-1-k))}\\
&=\cos{(2\pi (r-s-1)-\frac{2\pi}{n-1}(r-s-1)k)}\\&=\widetilde{C}_k(r,s).
\end{align*}
Hence $\widetilde{C}_{n-1-k}=\widetilde{C}_{k}$.

We note that
\begin{align*}
\phi^2_{n-1-k}&=\cos^2(\frac{\pi}{n-1}(n-1-k))\\
&=\cos^2(\pi-\frac{\pi}{n-1}k)\\
&=\cos^2(\frac{\pi}{n-1}k)=\phi^2_{k}.
\end{align*}
Finally, since
$\phi^2_{n-1-k}=\phi^2_{k}$, we obtain
\begin{align*}
(2\phi_{n-1-k}+\frac{1}{2\phi_{n-1-k}})^2&=
4\phi_{n-1-k}^2+\frac{1}{4\phi_{n-1-k}^2}+2\\&=4\phi_{k}^2+\frac{1}{4\phi_{k}^2}+2\\&=(2\phi_{k}+\frac{1}{2\phi_{k}})^2.
\end{align*}

Hence,  $B_k=B_{n-1-k} $. Thus, (\ref{dkbk})
simplifies to
\begin{equation*}
\sum_{k=1}^{n-2} \Delta_k=\sum_{k=1}^{\frac{n-2}{2}}B_k.
\end{equation*}
This completes the proof.
\end{proof}
\begin{theorem}\label{lapev}
$L_{even}=   U^{\dagger}  $
\end{theorem}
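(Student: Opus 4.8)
The plan is to read $U^{\dagger}$ off its spectral decomposition in Lemma \ref{specU} and then exploit that $U^{\dagger}$ is a \emph{real} matrix to discard the imaginary parts of the complex rank-one terms. Lemma \ref{specU} already gives
\[U^{\dagger}=A-\sum_{k=1}^{n-2}\frac{2}{\theta_{k}}\frac{q_{k}q_{k}^{\ast}}{\langle q_{k},q_{k}\rangle},\]
in which $A$ is the real symmetric matrix of (\ref{A}), while each $q_{k}q_{k}^{\ast}$ is a genuinely complex Hermitian matrix. The task is to recognize this right-hand side as $L_{even}=A+\sum_{k=1}^{\frac{n-2}{2}}B_{k}$.

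The key observation is that the left-hand side is real. Indeed $U=-\tfrac12 PDP$ is real symmetric, since $D$ and the projection $P$ onto $\1^{\perp}$ are real, and the Moore--Penrose inverse of a real matrix is again real. The first summand $A=\frac{9(n-1)}{(n+4)^2}yy'$ is real as well, because $y$ is a real vector, and every eigenvalue $\theta_{k}=-8\cos^{2}(\tfrac{\pi k}{n-1})-2$ is a real scalar. Consequently the entire sum $\sum_{k}\frac{2}{\theta_{k}}\frac{q_{k}q_{k}^{\ast}}{\langle q_{k},q_{k}\rangle}$ equals the real matrix $A-U^{\dagger}$, so its imaginary part vanishes and passing to real parts term by term is legitimate; since each $\theta_{k}$ is real it may be pulled through $\re$, yielding
\[U^{\dagger}=A+\sum_{k=1}^{n-2}\Bigl(-\frac{2}{\theta_{k}}\,\re\frac{q_{k}q_{k}^{\ast}}{\langle q_{k},q_{k}\rangle}\Bigr)=A+\sum_{k=1}^{n-2}\Delta_{k},\]
with $\Delta_{k}$ exactly as defined in Lemma \ref{dbk}.

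It then remains only to invoke Lemma \ref{dbk}, which collapses the full sum $\sum_{k=1}^{n-2}\Delta_{k}$ into the half-length sum $\sum_{k=1}^{\frac{n-2}{2}}B_{k}$ via the symmetry $B_{k}=B_{n-1-k}$, together with the identification of $\re q_{k}q_{k}^{\ast}$ with the circulant blocks $C_k$ and $C_k+\widetilde{C_k}$ from Lemma \ref{qkvalue}. Combining the two displays gives
\[U^{\dagger}=A+\sum_{k=1}^{\frac{n-2}{2}}B_{k},\]
which is precisely $L_{even}$ by definition (\ref{speciallap}). I do not expect a genuine obstacle here, since all the computation has already been absorbed into the preceding lemmas; the only point requiring care is the reality argument itself, namely justifying that taking real parts preserves the identity, which rests entirely on $U^{\dagger}$ and $A$ being real and the $\theta_{k}$ being real scalars.
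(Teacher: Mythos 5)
Your proposal is correct and follows essentially the same route as the paper: both start from the spectral decomposition in Lemma \ref{specU}, use the reality of $U^{\dagger}$ and $A$ (together with the linearity of $\re$ and the realness of the $\theta_k$) to replace each $q_k q_k^{\ast}$ term by its real part $\Delta_k$, and then invoke Lemma \ref{dbk} to collapse $\sum_{k=1}^{n-2}\Delta_k$ into $\sum_{k=1}^{\frac{n-2}{2}}B_k$. Your added justification that $U^{\dagger}$ is real because $U=-\tfrac12 PDP$ is real symmetric is a welcome, if minor, elaboration of a step the paper states without comment.
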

\begin{proof}

By Lemma \ref{specU}, we have
\begin{equation}\label{r1}
 U^{\dagger}=A
-\sum_{k=1}^{n-2}  {(\frac{2}{\theta_{k}}) }  \frac{ {q_{k}}  {q_{k}} ^{\ast }   }{ { 
{\langle{q_{k}}},{q_{k}}}\rangle}.
\end{equation}
Since $U^{\dagger}$ and $A$ are real matrices,
from (\ref{r1}), 
\begin{equation}\label{r2}
U^{\dagger}= A+\sum_{k=1}^{n-2}(-\frac{2}{\theta_{k}})\re{ }\frac{ {q_{k}}  {q_{k}} ^{\ast }   }     { { 
{\langle{q_{k}}},{q_{k}}}\rangle}.
\end{equation}
Since $ \Delta_k={(-\frac{2}{\theta_{k}})\re }\frac{ {q_{k}}{q_{k}} ^{\ast }}{{ \langle{{q_{k}}},{q_{k}}\rangle}}$,
 we have
\begin{equation}\label{ll4}
U^{\dagger}=A+\sum_{k=1}^{n-2}\Delta_k.
\end{equation}
By Lemma \ref{dbk}, 
\[\sum_{k=1}^{n-2}\Delta_k=\sum_{k=1}^{\frac{n-2}{2}}B_k.\]
Thus,  (\ref{ll4}) becomes
\[U^{\dagger}=A+\sum_{k=1}^{\frac{n-2}{2}}B_k. \]
In view of  (\ref{speciallap}),
\[L_{even} =A+\sum_{k=1}^{\frac{n-2}{2}}B_k.\]
Hence, \[U^{\dagger}=L_{even}.\]
This completes the proof.
\end{proof}

\subsection{Computation of $D^\dag \1$}
We need to compute $D^\dag \1$ now.
\begin{lemma}\label{uvalue}
\[D^{\dagger}\1=(\frac{-3n+13}{(n^2+3n-4)}, \frac{-n+6}{(n^2+3n-4)}\1_{n-1}',\frac{1}{n+4}\1_{n-1}' )'.\]
\end{lemma}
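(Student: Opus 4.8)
The plan is to exhibit the claimed vector and verify directly that it satisfies the two properties which characterize $D^{\dagger}\1$ uniquely. Write $\widehat{u}$ for the right-hand side of the asserted formula, and record first the factorization $n^2+3n-4=(n+4)(n-1)$, so that
\[
\widehat{u}=\Bigl(\tfrac{-(3n-13)}{(n+4)(n-1)},\ \tfrac{6-n}{(n+4)(n-1)}\1_{n-1}',\ \tfrac{1}{n+4}\1_{n-1}'\Bigr)'.
\]
Since $D$ is symmetric, $\ker D$ and $\operatorname{range}(D)$ are orthogonal complements, and $D^{\dagger}\1$ is the unique vector lying in $\operatorname{range}(D)$ whose image under $D$ equals the orthogonal projection of $\1$ onto $\operatorname{range}(D)$. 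From the proof of Lemma \ref{d+} we already have the vector $w$ with $Dw=\1$; hence $\1\in\operatorname{range}(D)$ and that projection is $\1$ itself. Therefore it suffices to check the two conditions: (a) $D\widehat{u}=\1$, and (b) $\widehat{u}\perp\ker D$, i.e. $\langle \widehat{u},f^{i}\rangle=0$ for every basis vector $f^{i}$ of the null space listed in (K1).

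For (a) I would multiply $\widehat{u}$ by $D$ blockwise, using $(J_{n-1}-I_{n-1})\1_{n-1}=(n-2)\1_{n-1}$, $S\1_{n-1}=(3n-7)\1_{n-1}$, and $T\1_{n-1}=4(n-3)\1_{n-1}$, together with the same row-sum value for $S'$ (circulant matrices have equal row and column sums). Because $\widehat{u}$ is constant on each of its last two coordinate blocks, each of the three blocks of $D\widehat{u}$ reduces to a single scalar multiple of $\1_{n-1}$ (or a scalar in the first coordinate). Writing $a=\frac{6-n}{(n+4)(n-1)}$ and $b=\frac{1}{n+4}$, the top block gives $(n-1)(a+2b)=1$, while the two lower blocks give rational functions whose numerators both collapse to $n^2+3n-4=(n+4)(n-1)$, again producing $1$. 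Thus $D\widehat{u}=(1,\1_{n-1}',\1_{n-1}')'=\1$.

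For (b), each $f^{i}$ with $i\le n-2$ has the form $(1,-(e_i+e_{i+1})',e_i')'$, and $f^{n-1}$ has the analogous cyclic form. Since $\widehat{u}$ is constant on its two outer blocks, the inner product $\langle \widehat{u},f^{i}\rangle$ depends only on the block constants and equals $\widehat{u}(1)-2a+b$ in every case; substituting the values yields the numerator $-(3n-13)-2(6-n)+(n-1)=0$, so $\widehat{u}\perp\ker D$. Combining (a) and (b) with the uniqueness noted above gives $\widehat{u}=D^{\dagger}\1$.

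The computations are routine once set up, so there is no deep obstacle here; the only point requiring care is the justification that (a) and (b) together pin down $D^{\dagger}\1$. This rests on the symmetry of $D$ and on our already knowing, via $w$, that $\1$ lies in $\operatorname{range}(D)$. Without the latter, $D D^{\dagger}\1$ would merely be the projection of $\1$ onto $\operatorname{range}(D)$ rather than $\1$ itself, and condition (a) would have to be replaced by $D\widehat{u}=P\1$, which is exactly why invoking Lemma \ref{d+} at the outset is essential.
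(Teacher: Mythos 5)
Your proof is correct, and it takes a genuinely different route from the paper's. The paper first argues from the spectral decomposition that $D^{\dagger}\1$ has the block-constant form $(c,a\1_{n-1}',b\1_{n-1}')'$, then determines $a,b,c$ by combining the three linear equations from $DD^{\dagger}\1=\1$ with a quadratic constraint obtained by comparing $\tr D^{\dagger}$ (sum of reciprocal eigenvalues) with the trace of $-\tfrac12 L_{even}+\tfrac{n-1}{2}(D^{\dagger}\1)(D^{\dagger}\1)'$; the extra quadratic is genuinely needed there because the $3\times 3$ linear system in (\ref{u55}) is singular (its coefficient determinant vanishes), so the linear equations alone do not pin down $a,b,c$. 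You instead verify the candidate vector against the standard characterization: for symmetric $D$, $D^{\dagger}\1$ is the unique solution of $Dv=\1$ orthogonal to $\ker D$, given that $\1\in\operatorname{range}(D)$ (which follows from $Dw=\1$ in Lemma \ref{d+}). Your block computations for $D\widehat{u}=\1$ reproduce exactly the three identities (\ref{u55}), and the orthogonality check $c-2a+b=0$ against the kernel basis of (K1) replaces the paper's trace argument. Your route is shorter and more elementary — it avoids the eigenvalue sums and the derivation of the quadratic (\ref{u22}) entirely — at the cost of being a pure verification that offers no mechanism for discovering $a,b,c$; the paper's derivation, by contrast, shows how the values are forced. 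You were also right to flag the one non-routine point: without knowing $\1\in\operatorname{range}(D)$, condition (a) would have to be weakened to $D\widehat{u}$ equalling the projection of $\1$ onto the range, and the argument would not close.
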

\begin{proof}
First, we claim that 
\[D^{\dagger}\1=
(c, a\1_{n-1}', b\1_{n-1}')',\] where $a,b,c \in \rr$.
By Lemma \ref{d+},
\begin{equation} \label{dd1}
D^{\dagger}\1 =\sum_{j=1}^{2} a_{j} \frac{\x_{j}}{\la_{j}},
\end{equation} 
where \[a_{j}=\langle \1,\x_{j}\rangle,~~\x_{j}=\frac{x_j}{\|x_j\|},~~\lambda_{j}=3n-8-(-1)^{j}\sqrt{5}\sqrt{n(2n-9)+12}. \]
We note from (K2) that 
\[x_j=(\alpha_j,\beta_j\1_{n-1}',\1_{n-1}')',\] where $\alpha_j$ and $\beta_j$
are some scalars.
Since (\ref{dd1}) can be expressed as
  \[D^{\dagger}\1= \sum_{j=1}^{2} a_{j} \frac{x_{j}}{\la_{j}{\|x_j\|}},  \] 
 it follows that 
\[D^{\dagger}\1
=(c, a\1_{n-1}', b\1_{n-1}')',\] 
 where $a,b,c$ are some scalars. This proves the claim.

We now show that
\[c=\frac{-3n+13}{(n^2+3n-4)},~~ a=\frac{-n+6}{(n^2+3n-4)},~~b=\frac{1}{n+4}.\]
Direct multiplication gives
 \[(D^{\dagger}\1)(D^{\dagger}\1 )' = \left[
\begin{array}{ccccccccc}
c^2 & ac\1_{n-1}' & cb\1_{n-1}'\\
ca\1_{n-1} & a^2J_{n-1} & abJ_{n-1}\\
bc \1_{n-1} & baJ_{n-1} & b^2J_{n-1}
\end{array}
\right].\]
Hence
\begin{equation}\label{dvec}
\tr~(D^{\dagger}\1)(D^{\dagger}\1 )'=c^2+(n-1)a^2+(n-1)b^2.
\end{equation}
By (K2), the non-zero eigenvalues of $D^\dagger $ are 
\[\frac{1}{\lambda_{1}},\frac{1}{\lambda_{2}},\frac{1}{ \theta_{1}},\dotsc,\frac{1}{ \theta_{n-2}},\]
where $\lambda_{1},\lambda_{2},\theta_{1},\dotsc,\theta_{n-2}$ are given in 
$(\ref{lambdaj})$ and $(\ref{thetak})$. 
So,
\begin{equation}\label{traced}
\tr~ D^\dagger=\sum_{i=1}^{2}\frac{1}{\lambda_{i}} +   \sum_{k=1}^{n-2}\frac{1}{\theta_{k}}.
\end{equation}
By Lemma $\ref{qk}$ and Lemma $\ref{y}$, 
$L_{even}$ has nonzero eigenvalues
\[\frac{2n-1}{n+4},~
\frac{-2}{ \theta_{1}},\dotsc,\frac{-2}{ \theta_{n-2}}.\]

We now claim that $\rank(L_{even})=n-1$. 
Note that $\rank(L_{even})=\rank(PDP)$. 
By equation (\ref{null})
and $(\ref{1fi=0})$, 
\[\{f_{1},\dotsc,f_{n-1},\1\} \subseteq \mbox{null-space}(PDP).\]
By Lemma  $\ref{qk}$ and Lemma $\ref{y}$, $L_{even}$ has at least $n-1$ non-zero eigenvalues. Hence, $L_{even}$ has exactly $n-1$ non-zero eigenvalues.
This shows that $\rank(L_{even})=n-1$. The proof of claim is complete.

So,
 \begin{equation}\label{traceu}
\tr ~L_{even}=\frac{2n-1}{n+4}+\sum_{k=1}^{n-2}\frac{-2}{\theta_{k}}. 
\end{equation}
By (K4),
\[D^{\dagger}=-\frac{1}{2} L_{even}+\frac{1}{\1'D^\dag \1}(D^\dag \1)(D^\dag \1)^{'}. \]
From Lemma \ref{d+}, we have $ \1'D^\dag \1=  \frac{2}{n-1}$.
Hence,
\begin{equation}\label{n-12}
D^{\dagger}=-\frac{1}{2}L_{even}+\frac{n-1}{2}(D^\dag \1)(D^\dag \1)^{'}. 
\end{equation}

Using (\ref{dvec}) and (\ref{traceu}) in (\ref{n-12}), we obtain
\begin{equation}\label{sumld}
\tr~ D^\dagger=-\frac{2n-1}{2(n+4)}+\sum_{k=1}^{n-2}\frac{1}{\theta_{k}}+\frac{n-1}{2}(c^2+(n-1)a^2+(n-1)b^2).
\end{equation}
By (\ref{traced}) and (\ref{sumld}), 


\begin{equation}\label{u11}
\frac{\lambda_{1}+\lambda_{2}}{\lambda_{1}\lambda_{2}}=\frac{-1}{2}\frac{(2n-1)}{(n+4)}+\frac{n-1}{2}(c^2+(n-1)a^2+(n-1)b^2).
\end{equation}
Since \[\lambda_{j}=3n-8-(-1)^{j}\sqrt{5}\sqrt{n(2n-9)+12},\]
 (\ref{u11}) leads to
\[\frac{2(3n-8)}{-(n^2+3n-4)}+\frac{(2n-1)}{2(n+4)}=\frac{n-1}{2}(c^2+(n-1)a^2+(n-1)b^2).\]
The above equation can be rewritten as
\begin{equation}\label{u22}
\frac{2n^2-15n+33}{2(n+4)(n-1)}=\frac{n-1}{2}(c^2+(n-1)a^2+(n-1)b^2).
\end{equation}
Recall that
\begin{equation*} 
D=\left[
\begin{array}{ccccccccc}
0 & \1_{n-1}' & 2 \1_{n-1}'\\
\1_{n-1} & 2(J_{n-1}-I_{n-1}) & S\\
2 \1_{n-1} & S' & T
\end{array}
\right],
\end{equation*}
where $S=\cir(1,\underbrace{3,\dotsc,3}_{n-3},1)$ and $T=\cir(0,2,\underbrace {4,\dotsc,4}_{n-4},2)$.

Since $DD^\dag \1=\1$, we have
\[\left[
\begin{array}{ccccccccc}
0 & \1_{n-1}' & 2 \1_{n-1}'\\
\1_{n-1} & 2(J_{n-1}-I_{n-1}) & S\\
2 \1_{n-1} & S' & T
\end{array}
\right]\left[
\begin{array}{ccccccccc}c\\a\1_{n-1}\\ b\1_{n-1}\end{array}
\right]=\left[
\begin{array}{ccccccccc}1\\ \1_{n-1}\\ \1_{n-1}\end{array}
\right].\]
Because \[J_{n-1}\1_{n-1}=(n-1)\1_{n-1},~~S\1_{n-1}=(3n-7)\1_{n-1}~\mbox{and}~ T\1_{n-1}=4(n-3)\1_{n-1},\]
we get,
\begin{equation}\label{u55}
\begin{aligned}
(n-1)(a+2b)&=1  \\
c+2a(n-2)+b(3n-7)&=1\\
2c+a(3n-7)+4b(n-3)&=1.
\end{aligned}
\end{equation}
By (\ref{u22}) and (\ref{u55}), we get 
\[c=\frac{-3n+13}{(n^2+3n-4)},~~ a=\frac{-n+6}{(n^2+3n-4)},~~b=\frac{1}{n+4}.\]
Thus,
 \[D^{\dagger}\1=(\frac{-3n+13}{(n^2+3n-4)}, \frac{-n+6}{(n^2+3n-4)}\1_{n-1}', \frac{1}{n+4}\1_{n-1}' )'.\] 
 The proof is complete.
  \end{proof}
\subsection{Proof of inverse formula}
We prove our main result now.
\begin{theorem}
Let $n \geq 4$ be an even integer. Then the Moore-Penrose inverse of 
$D$ is given by 
\[D^\dag = -\frac{1}{2}L_{even}+\frac{n-1}{2}uu',\]
where 
\[u= \huge(\frac{-(3n-13)}{(n^2+3n-4)},~ \frac{-(n-6)}{(n^2+3n-4)}\1_{n-1}',~\frac{1}{(n+4)}\1_{n-1}' \huge)'.\]
\end{theorem}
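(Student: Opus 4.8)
The plan is to read off the statement as an immediate application of the Euclidean-distance-matrix inversion formula (K4), assembling the three ingredients that the preceding lemmas have already supplied. By (K2) the matrix $D=D(G_n)$ is an EDM, and by Lemma \ref{d+} we have $\1'D^\dagger\1=\frac{2}{n-1}>0$, so the hypotheses of (K4) hold with $m=2n-1$. Taking $P=I-\frac{J}{2n-1}$, $G=-\frac{1}{2}PDP=U$ and rank-one vector $D^\dagger\1$, formula (K4) gives
\[
D^\dagger=-\frac{1}{2}U^\dagger+\frac{1}{\1'D^\dagger\1}\,(D^\dagger\1)(D^\dagger\1)'.
\]
Everything then reduces to substituting the quantities already determined.

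First I would replace $U^\dagger$ by $L_{even}$, which is exactly the content of Theorem \ref{lapev}. Next I would use the scalar identity $\1'D^\dagger\1=\frac{2}{n-1}$ from Lemma \ref{d+} to rewrite the coefficient of the rank-one term as $\frac{1}{\1'D^\dagger\1}=\frac{n-1}{2}$. Finally I would identify the rank-one vector itself: by Lemma \ref{uvalue},
\[
D^\dagger\1=\left(\tfrac{-3n+13}{n^2+3n-4},\ \tfrac{-n+6}{n^2+3n-4}\1_{n-1}',\ \tfrac{1}{n+4}\1_{n-1}'\right)',
\]
which is precisely the vector $u$ in the statement once one writes $-3n+13=-(3n-13)$ and $-n+6=-(n-6)$. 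Combining the three substitutions yields $D^\dagger=-\frac{1}{2}L_{even}+\frac{n-1}{2}uu'$, completing the proof.

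For this theorem in isolation there is no real obstacle: it is pure bookkeeping on top of (K4). The genuine difficulty has been front-loaded into the lemmas one may now invoke. If I were building the argument from scratch, the hard part would be Theorem \ref{lapev}, i.e.\ showing that the explicit circulant-block matrix $L_{even}=A+\sum_k B_k$ coincides with $U^\dagger$; this rests on the full spectral decomposition of $U$ (the eigenpairs $-2/\theta_k$, $\q_k$ from Lemma \ref{qk}, the extra eigenpair $(2n-1)/(n+4)$, $y$ from Lemma \ref{y}, and their assembly in Lemma \ref{specU}), followed by the explicit block evaluation of each projector in Lemma \ref{qkvalue} and the real-part/symmetrization together with the folding $B_k=B_{n-1-k}$ in Lemma \ref{dbk}. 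The only other nontrivial input is the linear-algebraic determination of the three scalars $a,b,c$ in Lemma \ref{uvalue}. Granting all of these, the present statement follows in one line from (K4).
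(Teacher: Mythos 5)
Your proposal is correct and follows essentially the same route as the paper: both apply (K4) to $D$ and $U=-\frac{1}{2}PDP$, then substitute $U^\dagger=L_{even}$ (Theorem \ref{lapev}), $\1'D^\dagger\1=\frac{2}{n-1}$ (Lemma \ref{d+}), and $D^\dagger\1=u$ (Lemma \ref{uvalue}). Your remarks correctly locate where the real work lies, namely in the lemmas being invoked rather than in this final assembly.
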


\begin{proof}Since $D$ is an EDM and $\1'D^\dag\1>0$, by the result in (K4),
\[D^{\dagger}=-\frac{1}{2}U^{\dagger}+\frac{1}{\1'D^\dag \1}(D^\dag \1)(D^\dag \1)^{'}. \]
Using Theorem  \ref{lapev}  and  Lemma \ref{uvalue},
we get
$L_{even}=U^\dag$ and $u=D^\dag \1$.
From Lemma \ref{d+},  we have $\1'D^\dag \1=\frac{2}{n-1}$.
Hence,
\[D^{\dagger}=-\frac{1}{2}L_{even}+\frac{n-1}{2}uu^{'}. \]
The proof is complete.
\end{proof}

\section{Moore-Penrose inverse of $D(G_{n})$ when $n$ is odd}
We now consider $G_n$ where $n$ is an odd integer.
As in even case, we define
a special odd Laplacian matrix now. This definition is motivated by numerical experiments.
\subsection{Special odd Laplacian} \label{spl.lap.odd}
Define \[\phi_k= \cos(\frac{\pi k}{n-1})~~~k=1,\dotsc,{n-2}. \]
Since $n-1$ is odd, 
$\phi_{\frac{n-1}{2}}=0$. For any other  $k$, $\phi_{k} \neq 0$. Define
\[\nabla:=\{1,\dotsc,n-2\} \smallsetminus \{\frac{n-1}{2}\}.  \]
Let
\begin{equation} \label{AA}
A:=\frac{9(n-1)}{(n+4)^2}
\left[
\begin{array}{ccccccccc}
1 & \frac{(n-2)}{3(n-1)}\1_{n-1}' &\frac{-(n+1)}{3(n-1)}\1_{n-1}' \\
\frac{(n-2)}{3(n-1)}\1_{n-1}& \frac{(n-2)^2}{9(n-1)^2}J_{n-1} &\frac{-(n-2)(n+1)}{9(n-1)^2}J_{n-1} \\
\frac{-(n+1)}{3(n-1)} \1_{n-1}& \frac{-(n-2)(n+1)}{9(n-1)^2}J_{n-1} & \frac{(n+1)^2}{9(n-1)^2}J_{n-1}
\end{array}
\right],
\end{equation}
\begin{equation}\label{bk2val}
B_{k}:=  \frac{2}{(n-1)(2\phi_k+ \frac{1}{                                                        2\phi_k})^{2}} 
\left[
\begin{array}{ccc} 
 0&\0&\0\\
\0& \frac{C_k}{4\phi_{k  }^{2}}& \frac{C_k+\widetilde{C_{k}}}{ 4\phi_k^{2}  }\\ 
\0 & \frac{C_k+\widetilde{C_{k}}^{'}}{ 4\phi_{k}^{2}} & C_k \\
\end{array}
\right]~~~k \in \nabla,
\end{equation}
\begin{equation}\label{h22}
H:=\frac{1}{n-1}\begin{bmatrix}  0&\0'&\0'\\
\0&T&O\\ \0&O&O\end{bmatrix},
\end{equation}
 where
 \[T(r,s)=(-1)^{r+s},\]
 \[C_{k}=\cir(1,\cos(\frac{2\pi k }{n-1}),\dotsc,\cos(\frac{2 \pi (n-2)  k }{n-1})),\]
 \[\widetilde{C_{k}}= \cir(\cos(\frac{2 \pi k}{n-1}),\dotsc,\cos(\frac{2 \pi (n-2) k }{n-1}),1).\] 
 The special odd Laplacian matrix for $G_n$ is defined by
 \begin{equation}\label{speciallapo}
L_{odd}:=A+H+ \sum_{k=1}^{\frac{n-3}{2}}B_k.
\end{equation}
\subsection{Inverse formula for $D(G_n)$}
Our main result in this section is the following.
\begin{theorem}\label{main2}
Let $n \geq 5$ be an odd integer. Then the Moore-Penrose inverse of 
$D(G_n)$ is given by 
\[D(G_n)^\dag = -\frac{1}{2}L_{odd}+\frac{n-1}{2}uu',\]
where 
\[u= \huge(\frac{-(3n-13)}{(n^2+3n-4)},~ \frac{-(n-6)}{(n^2+3n-4)}\1_{n-1}',~\frac{1}{(n+4)}\1_{n-1}' \huge)'.\]
\end{theorem}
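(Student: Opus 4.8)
The plan is to run the entire even-case argument in parallel, the only genuinely new ingredient being the index $k=\frac{n-1}{2}$, at which $\phi_k=\cos(\frac{\pi k}{n-1})=\cos\frac{\pi}{2}=0$, so the construction $u_k=-\frac{1}{8\phi_k^2}Sv_k$ of Theorem \ref{ee} breaks down. First I would record the eigenvectors of $D$. For every $k\in\nabla$ the vectors $q_k=(0,u_k',v_k')'$ and all the computations in Theorem \ref{ee} carry over verbatim, since nothing there used the parity of $n$ beyond $\phi_k\neq0$. Likewise the eigenvector $y$ of Lemma \ref{y} with $U^\dagger y=\frac{2n-1}{n+4}y$ survives unchanged, because its verification rests only on $S\1_{n-1}=(3n-7)\1_{n-1}$, $T\1_{n-1}=4(n-3)\1_{n-1}$ and $J_{n-1}\1_{n-1}=(n-1)\1_{n-1}$, none of which depends on parity.

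The crux is to supply the eigenvector that replaces the degenerate $q_{(n-1)/2}$. Let $w:=(1,-1,1,\dots,-1)'\in\rr^{n-1}$ be the alternating vector; equivalently $w=v_{(n-1)/2}$, since $\omega^{\frac{n-1}{2}}=-1$ forces $w(r)=(-1)^{r-1}$. I would verify by direct block multiplication that $q_*:=(0,w',0)'$ satisfies $Dq_*=-2q_*=\theta_{(n-1)/2}q_*$: indeed $\1_{n-1}'w=0$ (the block length $n-1$ is even), $2J_{n-1}w-2w=-2w$, and $S'w=-2(1+\omega^{\frac{n-1}{2}})w=0$, so the three blocks of $Dq_*$ are $0$, $-2w$, $0$. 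Since $\theta_{(n-1)/2}=-2$, the matching eigenvalue of $U^\dagger$ is $-\frac{2}{\theta_{(n-1)/2}}=1$; because $\|w\|^2=n-1$ and $T=ww'$, this eigenvalue contributes exactly $\frac{q_*q_*'}{\|q_*\|^2}=H$ to the spectral decomposition of $U^\dagger$, which is the sole reason the extra summand $H$ appears in (\ref{speciallapo}). Note also $\langle\1,q_*\rangle=\1_{n-1}'w=0$, so $q_*$ does not disturb the computation of $D^\dagger\1$.

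With these pieces in hand the identity $L_{odd}=U^\dagger$ follows as in Theorem \ref{lapev}: $\rank(U)=n-1$, and its $n-1$ nonzero eigenpairs are $y$, $q_*$, and $\{q_k\}_{k\in\nabla}$, whence $U^\dagger=A+H+\sum_{k\in\nabla}(-\frac{2}{\theta_k})\frac{q_kq_k^\ast}{\langle q_k,q_k\rangle}$. Taking real parts (as $U^\dagger$, $A$, $H$ are real) and invoking the pairing $B_k=B_{n-1-k}$ of Lemma \ref{dbk}, the indices of $\nabla$ collapse in pairs $\{k,n-1-k\}$ — the self-paired middle index $\frac{n-1}{2}$ being exactly the one excluded — so $\sum_{k\in\nabla}\Delta_k=\sum_{k=1}^{(n-3)/2}B_k$ and hence $U^\dagger=A+H+\sum_{k=1}^{(n-3)/2}B_k=L_{odd}$.

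Finally, $D^\dagger\1=u$ and $\1'D^\dagger\1=\frac{2}{n-1}$ are obtained exactly as in Lemma \ref{d+} and Lemma \ref{uvalue}. The only quantity to recheck is the trace bookkeeping, and it balances automatically: $\tr H=\frac{1}{n-1}\|w\|^2=1=-\frac{2}{\theta_{(n-1)/2}}$, so $\frac{2n-1}{n+4}+1+\sum_{k\in\nabla}(-\frac{2}{\theta_k})=\frac{2n-1}{n+4}+\sum_{k=1}^{n-2}(-\frac{2}{\theta_k})$, which is verbatim the even-case trace of $U^\dagger$ in (\ref{traceu}); since the linear system (\ref{u55}) coming from $DD^\dagger\1=\1$ is parity-independent, the same $a,b,c$ and hence the same $u$ emerge. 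Substituting $U^\dagger=L_{odd}$, $D^\dagger\1=u$, and $\1'D^\dagger\1=\frac{2}{n-1}$ into (K4) then gives $D^\dag=-\frac12 L_{odd}+\frac{n-1}{2}uu'$. I expect the genuine obstacle to be solely the isolation and verification of the special eigenvector $q_*$ and the matching of its rank-one contribution with $H$; once that is pinned down, every remaining step is a parity-insensitive transcription of the even-case lemmas.
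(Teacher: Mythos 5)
Your proposal is correct and follows essentially the same route as the paper: the special eigenvector $q_{(n-1)/2}=(0,(1,-1,\dotsc,1,-1),\0)'$ with eigenvalue $-2$ is exactly the one the paper isolates in Theorem \ref{eee}, its rank-one contribution $\frac{q_\ast q_\ast'}{\|q_\ast\|^2}=H$ is the paper's equation (\ref{h}), and the collapse of $\sum_{k\in\nabla}\Delta_k$ to $\sum_{k=1}^{(n-3)/2}B_k$ via $B_k=B_{n-1-k}$ together with the trace identity $\tr H=1=-2/\theta_{(n-1)/2}$ reproduces Lemma \ref{d1+}, Lemma \ref{specU2}, Theorem \ref{lapevo} and Lemma \ref{uvalue2} before the final appeal to (K4). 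Your observation that $w=v_{(n-1)/2}$ (so $S'w=0$ follows from the circulant eigenvalue formula rather than the paper's direct summation) is a minor cosmetic variant, not a different argument.
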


\subsection{Illustration of the formula for $G_5$}
 The distance matrix of the gear graph 
$G_5$ is 
\[D:=D(G_5)= \left[
\begin{array}{ccccccccc}0& \1_{4}'&2\1_{4}'\\
\1_{4}&2(J_{4}-I_{4})& \cir( 1,3,3,1)\\
2\1_{4}&\cir(1,1,3,3)&\cir(0,2,4,2)
 \end{array}
\right].
\]
The Laplacian $L:=L_{odd}$ of $G_5$ is given by
\[L=
  \left[
\begin{array}{ccccccccc}
\frac{4}{9}& \frac{1}{9}\1_{4}'   &\frac{-2}{9}\1_{4}'\\
 \frac{1}{9}\1_{4}&\cir( \frac{1}{3}, \frac{-2}{9},   \frac{2}{9},\frac{   -2}{9})&\cir( 0,\frac{-1}{9},  \frac{-1}{9},0)\\
 \frac{-2}{9}\1_{4}'& \cir(0, 0,\frac{-1}{9},  \frac{-1}{9})
&\cir( \frac{ 2}{9},\frac{1}{9},0,  \frac{1}{9})
\end{array}
\right].
\]
The vector $u$ in Theorem \ref{main2} is
\[u=(\frac{-1}{18},\frac{1}{36}\1_4,\frac{1}{9}\1_{4}')'.\]
According to Theorem \ref{main2},
\[D^\dagger =-\frac{1}{2}L +{2}uu'.\]

The right hand side of the above equation is
\[ 
  \left[
\begin{array}{ccccccccc}
\frac{-35}{162}& \frac{-19}{324}\1_{4}'   &\frac{8}{81}\1_{4}'\\
 \frac{-19}{324}\1_{4}&\cir( \frac{-107}{648}, \frac{73}{648},   \frac{-71}{648},\frac{ 73}{648})&\cir( \frac{1}{162},\frac{5}{81},  \frac{5}{81}, \frac{1}{162})\\
 \frac{8}{81}\1_{4}'& \cir(\frac{1}{162}, \frac{1}{162},\frac{5}{81},  \frac{5}{81} )
&\cir( \frac{ -7}{81},\frac{-5}{162},\frac{2}{81},  \frac{-5}{162})
\end{array}
\right],
\]
which is the Moore-Penrose inverse of $D$.
To prove the formula in Theorem \ref{main2}, we now proceed to compute the non-zero eigenvalues and the corresponding eigenvectors of $D(G_n)$.
We shall use $E$ for $D(G_n)$ in the sequel.
 
\subsection{Eigenvalues and eigenvectors of $E$}
As in even case, we compute all the non-zero eigenvalues and the corresponding eigenvectors of $E$. This is partly done in
(K2). The nonzero eigenvalues of $E$ are given by $\lambda_1,\lambda_2$ and 
$\theta_1,\dotsc,\theta_{n-2}$ in (K2). Eigenvectors corresponding to 
$\lambda_{1}$ and $\lambda_{2}$ were already computed and we need to find eigenvectors
corresponding to $\theta_k$.
\begin{theorem}\label{eee}
 Let $k \in \{1,\dotsc,n-2\}$. 
Define
\[ q_{k}:=(0,u_k',v_k')',\]
where 
\[u_{k}=
\begin{cases}
\frac{-1}{8\cos^{2}(\frac{\pi k}{n-1})}S v_{k}&  k\in \nabla\\
(1,-1,\dotsc,1,-1)'&  k=\frac{n-1}{2}.
\end{cases}\]
and 
\[v_{k}=
\begin{cases}
(1,\omega^{k},\omega^{2k},\dotsc,\omega^{k(n-2)})'&  k\in \nabla\\
\0&  k=\frac{n-1}{2},
\end{cases}\]
\[
~S=\cir(1,\underbrace{3,\dotsc,3}_{n-3},1)~~\mbox{and}\]
\[\theta_{k}:=-8\cos^{2}(\frac{\pi k}{n-1})-2.\]
Then,
\begin{enumerate}
\item[\rm (i)] $E q_k=\theta_k q_k$.
\item[\rm (ii)]
The vectors $q_1,\dotsc,q_{n-2}$ are mutually orthogonal.
\item[\rm (iii)] $\langle x_j,  q_{k}\rangle=0$.
\item[\rm (iv)] $\langle \1, q_k\rangle=0$. 
\end{enumerate}
\end {theorem}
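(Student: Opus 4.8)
The plan is to split the index set $\{1,\dots,n-2\}$ into the generic part $\nabla$ and the single degenerate index $k=\frac{n-1}{2}$, which is new to the odd case because $\cos(\frac{\pi k}{n-1})$ vanishes precisely there. For $k\in\nabla$ we have $\cos(\frac{\pi k}{n-1})\neq0$, so $u_k$ and $v_k$ are defined by the same formulas as in the even case, and every step in the proof of Theorem \ref{ee} goes through verbatim. Indeed, the only facts used there are $Sv_k=-2(1+\omega^{-k})v_k$, $S'v_k=-2(1+\omega^{k})v_k$, and $Tv_k=-8\cos^{2}(\frac{\pi k}{n-1})v_k$ (from Lemma \ref{Z2} and (K3)) together with $\1_{n-1}'v_k=0$, none of which depend on the parity of $n$. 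This disposes of (i)--(iv) for $k\in\nabla$, and in particular of the mutual orthogonality of any two $q_{i_1},q_{i_2}$ with $i_1,i_2\in\nabla$.

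The heart of the argument is the degenerate index $k=\frac{n-1}{2}$. Since $n-1$ is even, $\omega^{(n-1)/2}=e^{\pi i}=-1$, so the prescribed vector $u_k=(1,-1,\dots,1,-1)'$ is exactly the circulant eigenvector $v_{(n-1)/2}=(1,\omega^{(n-1)/2},\dots,\omega^{(n-2)(n-1)/2})'$, while $v_k=\0$ and $\theta_k=-8\cos^{2}(\frac{\pi}{2})-2=-2$. I would then compute $Eq_k$ blockwise. Because $u_k$ has equally many $+1$'s and $-1$'s we get $\1_{n-1}'u_k=0$, which kills the first block and forces $J_{n-1}u_k=\0$; hence the middle block is $2J_{n-1}u_k-2I_{n-1}u_k+Sv_k=-2u_k=\theta_k u_k$. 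For the last block $S'u_k+Tv_k=S'u_k$, and the decisive point is that the eigenvalue of $S'$ on $v_{(n-1)/2}$ is $-2(1+\omega^{(n-1)/2})=-2(1-1)=0$, so $S'u_k=\0=\theta_k v_k$. This yields $Eq_k=\theta_k q_k$, proving (i).

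For parts (ii)--(iv) at the degenerate index I would again exploit that $u_{(n-1)/2}$ is itself a circulant eigenvector. Orthogonality of $q_{(n-1)/2}$ with $q_i$ for $i\in\nabla$ reduces, after discarding the vanishing $v$-component, to $\langle u_{(n-1)/2},u_i\rangle$, which is a scalar multiple of $\langle v_{(n-1)/2},v_i\rangle=0$ by the orthogonality of circulant eigenvectors in (K3). Parts (iii) and (iv) then follow immediately from $\1_{n-1}'u_k=0$ and $v_k=\0$, since $\langle x_j,q_k\rangle=\beta_j\langle\1_{n-1},u_k\rangle+\langle\1_{n-1},v_k\rangle$ and $\langle\1,q_k\rangle=\langle\1_{n-1},u_k\rangle+\langle\1_{n-1},v_k\rangle$.

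The main obstacle is isolating the correct eigenvector at the degenerate index and recognizing that the very vanishing of $\cos(\frac{\pi k}{n-1})$ is what forces $Su_k=S'u_k=\0$, so that the middle-block and last-block conditions close up exactly with $\theta_k=-2$. Once this identification is made, the remaining verifications parallel the even case and are routine.
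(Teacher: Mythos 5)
Your proposal is correct and follows essentially the same route as the paper: reduce the indices in $\nabla$ to the even-case argument of Theorem \ref{ee}, then handle $k=\frac{n-1}{2}$ by a blockwise computation of $Eq_k$ using $\1_{n-1}'u_k=0$ and $S'u_k=\0$. Your observation that $u_{(n-1)/2}$ is the circulant eigenvector for $\omega^{(n-1)/2}=-1$ with $S'$-eigenvalue $-2(1+\omega^{(n-1)/2})=0$ is just a cleaner packaging of the paper's direct alternating-sum computation, and your explicit treatment of (ii)--(iv) at the degenerate index fills in details the paper leaves as ``similar.''
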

\begin{proof}
To prove 
\[Eq_k =\theta_k q_k ~~~k \in \nabla,\] one can use the same argument as given in Theorem \ref{ee}.
Suppose $k=\frac{n-1}{2}$.
We now  show that
\[Eq_{k}=\theta_{k} q_{k}=-2 q_{k}.\]
Recall from Definition \ref{dgn} that
\[E=\left[\begin{array}{ccc}
0 & \1_{n-1}^{'} & 2\1_{n-1}^{'}\\
\1_{n-1} & 2(J_{n-1}-I_{n-1}) & S\\
2\1_{n-1} & S^{'} & T
\end{array}\right].\]
By direct computation, we get
\begin{equation}\label{wn1}
Eq_k=\left[\begin{array}{ccc}
\1_{n-1}^{'}u_k
\\ 2(J_{n-1}-I_{n-1})u_k
\\S^{'} u_k
\end{array}\right].
\end{equation}
Since
\begin{equation*}\label{wn2}
\1_{n-1}^{'}u_k=0,
\end{equation*}
\begin{equation}\label{wn3}
2(J_{n-1}-I_{n-1})u_k=2\1_{n-1}
\1_{n-1}'u_k-2u_k=-2u_k.
\end{equation}
We recall that $S^{'}=\cir(1,1,\underbrace{3,3,\dotsc,3}_{n-3})$.  Because $n-1$ is even, we see that
\begin{equation}\label{wn4}
S^{'} u_k=(1-1+3-3\cdots+3-3) \1_{n-1}=\0.
\end{equation}
Using (\ref{wn3}) and  (\ref{wn4}) in  (\ref{wn1}), we obtain
\[E{{ q}}_k=-2{{ q}}_k.\]
Proof of (ii), (iii) and (iv) follow by a similar argument given in the proof of Theorem {\ref{ee}}.
\end{proof}

To this end, we have computed all the nonzero eigenvalues of
$E$. These are denoted by
 \[\lambda_1,\lambda_2,\theta_1,\dotsc,\theta_{n-2}.\]
Eigenvectors corresponding to $\lambda_{1}$ and $\lambda_{2}$ are $x_{1}$ and
$x_2$ given in (K2).
Eigenvectors corresponding to $\theta_1,\dotsc,\theta_{n-2}$ are
given by $q_{1},\dotsc,q_{n-2}$ in the previous theorem.
In the rest of the proof, we use $j$ to
denote an integer in $\{1,2\}$ and $k$ to denote an integer in
$\{1,\dotsc,n-2\}$.

\subsection{Computation of $L_{odd}$}

Let $P$ be the orthogonal projection onto $\{\1\}^{\perp}$. We now show that
\[L_{odd}=(-\frac{1}{2} PEP)^{\dag}. \]
Put $U:=-\frac{PEP}{2}$.    
Define
\[\x_{j}:=\frac{x_j}{\|x_j\|}
~~\mbox{and}~~\q_{k}:=\frac{q_k}{\|q_k\|}. \]

We now have
\[E \x_{j}=\lambda_{j}\x_j~~
\mbox{and}~~E\q_{k}=\theta_{k} \q_{k}. \]

To compute $U^\dag$ precisely, we begin with an elementary lemma.

\begin{lemma} \label{d1+}
\begin{enumerate}
\item[\rm (i)]
Let $a_{j}:=\langle \1,\x_{j}\rangle $. Then
\[E^{\dagger}\1 =\sum_{j=1}^{2} a_{j} \frac{\x_{j}}{\la_{j}}  
~~\mbox{and}~~
\1' E^\dagger \1=\frac{2}{n-1}.
\]
\item[\rm (ii)]
\[U^{\dagger} \q_k = -\frac{2}{\theta_k}\q_k.\]
\item[\rm (iii)]
Let $y=\large(1,\frac{n-2}{3(n-1)} \1_{n-1}',-\frac{n+1}{3(n-1)}\1_{n-1}'\large )^{'}$. Then,
 \[U^\dagger y=\frac{2n-1}{n+4}y.\]
 \item[\rm (iv)]
If $k\in\nabla$,
$${-(\frac{2}{\theta_{k}}) }\frac{ {q_{k}}{q_{k}} ^{\ast }   }{ {\langle {{q_{k}}},{q_{k}\rangle}}}$$ is equal to
\[\frac{1}{(n-1)(2\cos(\frac{\pi k}{n-1})+ \frac{1}{                                                        2\cos(\frac{\pi k}{n-1})})^{2}}\left[ \begin{array}{cccc}
 0&\0&\0\\
\0& 
 \frac{1}{4\cos^{2}(\frac{\pi k}{n-1})} v_k v_k^{\ast}
 &
  \frac{1+\omega^{-k}}{                                                        4\cos^{2}(\frac{\pi k}{n-1})}v_k v_k^{\ast}\\
\0& \frac{1+\omega^{k}}{                                                        4\cos^{2}(\frac{\pi k}{n-1})}v_k v_k^{\ast}
&v_k v_k^{\ast} \end{array}\right],\]
where \[v_k=(1,\omega^{k},\dotsc,\omega^{(n-2)k})'.\] 
\item[\rm (v)]
Define
\[\Delta_k:=-(\frac{2}{\theta_{k}})\re{ }\frac{ {q_{k}}{q_{k}} ^{\ast }}{{ \langle {{q_{k}}},{q_{k}}\rangle}}~~~~~k \in \nabla\   .
\]
Then,
\[ \sum_{k \in \nabla}   \Delta_k=\sum_{k=1}^{\frac{n-3}{2}}B_k,\]
where $B_k$ are defined in  (\ref{bk2val}).
 
 \end{enumerate}
\end{lemma}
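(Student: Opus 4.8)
The plan is to observe that the five assertions are the odd-case counterparts of Lemmas~\ref{d+}, \ref{qk}, \ref{y}, \ref{qkvalue} and \ref{dbk}, and that each follows from the argument already used in the even case, once the role of the extra eigenvector $q_{\frac{n-1}{2}}$ is pinned down. Throughout I would work with the full spectral resolution $E=\sum_{j=1}^{2}\la_{j}\x_{j}\x_{j}'+\sum_{k=1}^{n-2}\theta_{k}\q_{k}\q_{k}^{\ast}$ supplied by Theorem~\ref{eee}, in which the index $k=\frac{n-1}{2}$ (eigenvalue $-2$) is now included; the orthogonality relations in Theorem~\ref{eee} hold for \emph{every} $k$, including this special index, which is exactly what makes the bookkeeping go through.

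For (i) I would pass to $E^{\dagger}=\sum_{j}\la_{j}^{-1}\x_{j}\x_{j}'+\sum_{k}\theta_{k}^{-1}\q_{k}\q_{k}^{\ast}$ and apply it to $\1$. By Theorem~\ref{eee}(iv) every $\langle\1,\q_{k}\rangle=0$, so all circulant terms vanish and $E^{\dagger}\1=\sum_{j=1}^{2}a_{j}\x_{j}/\la_{j}$ with $a_{j}=\langle\1,\x_{j}\rangle$. For the scalar I would reuse the witness $w:=(\frac{3-n}{n-1},\frac{1}{n-1}\1_{n-1}',\0)'$, check directly that $Ew=\1$ and $\langle\1,w\rangle=\frac{2}{n-1}$, and conclude $\langle\1,E^{\dagger}\1\rangle=\frac{2}{n-1}$ because $w-E^{\dagger}\1$ lies in the null space of $E$, which is orthogonal to $\1$ by (\ref{1fi=0}). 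For (ii) I would substitute the (K4)-identity $U^{\dagger}=-2E^{\dagger}+2(E^{\dagger}\1)(E^{\dagger}\1)'/(\1'E^{\dagger}\1)$, expand via the spectral sum and part (i), and then act on $\q_{k}$; since $\q_{k}\perp\x_{j}$ and $\q_{k}\perp\q_{k'}$ for $k\ne k'$, only the diagonal term survives and yields $U^{\dagger}\q_{k}=-\frac{2}{\theta_{k}}\q_{k}$ for every $k$, the special index included. Part (iii) is a verbatim repeat of Lemma~\ref{y}: compute $Ey$ from $J_{n-1}\1=(n-1)\1$, $S\1=(3n-7)\1$, $T\1=4(n-3)\1$, observe that $Ey+\gamma y$ is a scalar multiple of $\1$ with $\gamma=\frac{2(n+4)}{2n-1}$, use $Py=y$ (as $\langle\1,y\rangle=0$) and $P\1=\0$ to obtain $PEPy=-\gamma y$, whence $U^{\dagger}y=\frac{2}{\gamma}y=\frac{2n-1}{n+4}y$.

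For (iv) and (v) I would restrict to $k\in\nabla$, where $\phi_{k}\ne 0$ and $q_{k}=(0,u_{k}',v_{k}')'$ has exactly the even-case shape with $u_{k}=\frac{-1}{8\phi_{k}^{2}}Sv_{k}$; the computations of $\langle q_{k},q_{k}\rangle$ and of $q_{k}q_{k}^{\ast}$ then reproduce Lemmas~\ref{qkvalue} and \ref{dbk} without change, giving $\Delta_{k}=\frac{1}{2}B_{k}$ for each such $k$. The one genuinely new point is the indexing in (v): when $n$ is odd, $\frac{n-1}{2}$ is an integer and is the unique fixed point of the involution $k\mapsto n-1-k$ on $\{1,\dotsc,n-2\}$, so removing it leaves $\nabla$ stable under this involution with $n-3$ paired terms. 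Since $B_{k}=B_{n-1-k}$ (the identities $C_{n-1-k}=C_{k}$, $\widetilde{C}_{n-1-k}=\widetilde{C}_{k}$ and $\phi_{n-1-k}^{2}=\phi_{k}^{2}$ established in Lemma~\ref{dbk} are insensitive to the parity of $n$), I would collapse $\sum_{k\in\nabla}\Delta_{k}=\frac{1}{2}\sum_{k\in\nabla}B_{k}=\sum_{k=1}^{\frac{n-3}{2}}B_{k}$.

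The main thing to watch is keeping the special eigenvector $q_{\frac{n-1}{2}}$ on the correct side of each statement: it must be counted in the spectral sums of (i)--(ii) (its orthogonality is precisely what lets me discard it there), yet it must be excluded from (iv)--(v), where the explicit denominators $2\phi_{k}+\frac{1}{2\phi_{k}}$ and $\phi_{k}^{2}$ are singular at $k=\frac{n-1}{2}$. Phrasing (i)--(ii) over the full range $1,\dotsc,n-2$ and (iv)--(v) over $\nabla$, exactly as in the statement, resolves this cleanly, and I expect no further difficulty beyond the even case.
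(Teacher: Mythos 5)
Your proposal is correct and follows the same route as the paper, whose proof of this lemma simply states that parts (i)--(v) are ``similar'' to the even-case Lemmas \ref{d+}, \ref{qk}, \ref{y}, \ref{qkvalue} and \ref{dbk}. You in fact supply more detail than the paper does, correctly isolating the only genuinely new point: when $n$ is odd, $k=\frac{n-1}{2}$ is the unique fixed point of the involution $k\mapsto n-1-k$, so excluding it leaves $\nabla$ partitioned into $\frac{n-3}{2}$ pairs with $B_k=B_{n-1-k}$, which yields $\sum_{k\in\nabla}\Delta_k=\sum_{k=1}^{\frac{n-3}{2}}B_k$.
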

\begin{proof}
The proof of (i) is similar to Lemma \ref{d+}, (ii) is similar to Lemma \ref{qk}, (iii) is similar to Lemma \ref{y}, (iv) is similar to Lemma \ref{qkvalue} and (v) is similar to Lemma \ref{dbk}.
\end{proof}

\begin{lemma} \label{specU2}
\[ U^{\dagger}=A+H
- \sum_{k \in \nabla }    {(\frac{2}{\theta_{k}}) }   \frac{ {q_{k}}  {q_{k}} ^{\ast }   }{ { 
{\langle {q_{k}}},{q_{k}}\rangle }},\]
where $A$  and $H$ are  defined in (\ref{AA}) and (\ref{h22}) respectively.
\begin{proof}

As in item (iii) of Lemma $\ref{d1+}$, let \[y=(1,\frac{(n-2)}{3(n-1)}\1_{n-1}', \frac{-(n+1)}{3(n-1)}\1_{n-1}')^{'}.\] Now, 
\[  \|y\|^2=\langle y,y\rangle =\frac{(2n-1)(n+4)}{9(n-1)}.\]
 In view of item (ii) of Lemma \ref{d1+}, for each $k=1,\dotsc,n-2$,
 \[U^{\dag} \q_{k}=-\frac{2}{\theta_k} \q_{k},  \]
 where
 $\theta_k={-8\cos^{2}(\frac{\pi k}{n-1})-2}$.
 By item (iii) of Lemma \ref{d1+}, 
\[U^{\dag}y= \frac{(2n-1)}{(n+4)}y. \] 
 Define
 \[\la:=\frac{2n-1}{n+4}. \]
 Then,
 \begin{equation}\label{y3}
 \frac{\la}{\|y\|^{2}}=\frac{9(n-1)}{(n+4)^{2}}.
 \end{equation}
 Direct multiplication gives
\begin{equation}\label{av}
yy'=\left[ \begin{array}{cccc}
1 & \frac{(n-2)}{3(n-1)}\1_{n-1}' &\frac{-(n+1)}{3(n-1)}\1_{n-1}' \\
\frac{(n-2)}{3(n-1)} \1_{n-1}& \frac{(n-2)^2}{9(n-1)^2}J_{n-1} &\frac{-(n-2)(n+1)}{9(n-1)^2}J_{n-1} \\
\frac{-(n+1)}{3(n-1)}\1_{n-1} & \frac{-(n-2)(n+1)}{9(n-1)^2}J_{n-1} & \frac{(n+1)^2}{9(n-1)^2}J_{n-1}
\end{array}\right].
\end{equation}
We note from (\ref{AA}) that
\begin{equation}\label{ab}
A=\frac{9(n-1)}{(n+4)^2}yy'={\lambda}\frac{yy'}{{\|y\|}^2}.
\end{equation}
Put $\delta= \frac{n-1}{2}$.
From Theorem \ref{eee},  $q_\delta =(0,u_\delta ',v_\delta')'$ where \[u_\delta =(\underbrace{1,-1,1,-1,\dotsc,1,-1}_{n-1})'\]  and   $v _\delta  =\0.$
By (\ref{h22}) \[T(r,s)=(-1)^{r+s}.\]
By a direct verification, we see that 
$u_\delta  u_\delta '=T.$
Hence
\begin{equation}\label{qkodd}
{ {q_\delta {q_\delta }}^{\ast}} =\left[ \begin{array}{cccc}
 0&\0&\0\\
\0& u_{\delta }u_{\delta  }' 
 &O\\
\0&O
&O\end{array}\right]= \left[ \begin{array}{cccc}
 0&\0&\0\\
\0& T 
 &O\\
\0&O
&O\end{array}\right].
\end{equation}
Also,
\begin{equation}\label{qkodd2}
{\langle {q_\delta },}{q_\delta\rangle  }=\langle u_\delta,u_\delta\rangle  + \langle v_\delta,v_\delta \rangle =n-1.
\end{equation}
Using (\ref{qkodd}) and (\ref{qkodd2}), we deduce
\begin{equation}\label{h}
 -{\frac{2}{\theta_{\delta  }}  {\q_{\delta   }}  {\q_{\delta  }}^{\ast}}=  \frac{ {q_{ \delta}}  {q_{  \delta}}^{\ast}}{\langle {q_{  \delta}},  {q_{\delta }\rangle}}=\frac{1}{n-1}\left[ \begin{array}{cccc}
 0&\0&\0\\
\0& T 
 &O\\
\0&O
&O\end{array}\right] =H.
\end{equation}
 Since $\rank(U)=n-1$, by
spectral decomposition, 
\begin{equation}\label{l1}
\begin{aligned}
 U^{\dagger}&={\lambda}\frac{yy'}{{\|y\|}^2}-\sum_{k=1}^{n-2} {\frac{2}{\theta_{k}}  {\q_{k}}  {\q_{k}}^{\ast}} 
 \\
 &={\lambda}\frac{yy'}{{\|y\|}^2}- {\frac{2}{\theta_{\delta}} { \q_{\delta}} {\q_{ \delta}}^{\ast}}
  -\sum_{k \in \nabla}   {\frac{2}{\theta_{k}} {\q_{k}}{\q_{k}}^{\ast}}. 
 \end{aligned}
\end{equation}
In view of (\ref{ab}) and (\ref{h}), (\ref{l1}) becomes   
\begin{equation*}
 U^{\dagger}=A+H-\sum_{k \in \nabla}  {\frac{2}{\theta_{k}} {\q_{k}}{\q_{k}}^{\ast}}.
\end{equation*}
Since ${\q_{k}}=\frac{{q_{k}}}{\|q_k\|}$, we get
\[ U^{\dagger}=A+H
- \sum_{k \in \nabla}    {(\frac{2}{\theta_{k}}) }   \frac{ {q_{k}}  {q_{k}} ^{\ast }   }{ { 
{\langle {q_{k}}},{q_{k}}\rangle }}.\]
This completes the proof.
\end{proof}
\end{lemma}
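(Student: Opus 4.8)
The plan is to build $U^{\dagger}$ directly from its spectral decomposition, following the template of the even case (Lemma \ref{specU}) but carefully tracking the extra eigenvector that appears when $n$ is odd. First I would record that $\rank(U)=n-1$: since $U=-\frac{1}{2}PEP$ and $\rank(E)=n$ by (K1), projecting onto $\{\1\}^{\perp}$ removes exactly one dimension, so $U$ has precisely $n-1$ nonzero eigenvalues. Hence $U^{\dagger}$ equals the sum of rank-one projections taken over any orthogonal basis of the range of $U$.

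Next I would assemble the eigenpairs already produced in Lemma \ref{d1+}. Item (iii) gives $U^{\dagger}y=\frac{2n-1}{n+4}y$ for $y=(1,\frac{n-2}{3(n-1)}\1_{n-1}',-\frac{n+1}{3(n-1)}\1_{n-1}')'$, and item (ii) gives $U^{\dagger}\q_k=-\frac{2}{\theta_k}\q_k$ for each $k\in\{1,\dots,n-2\}$. That is $1+(n-2)=n-1$ eigenvectors, matching the rank. I would then check they are mutually orthogonal: the $\q_k$ are pairwise orthogonal by Theorem \ref{eee}(ii), while $y$ has block form $(c,a\1_{n-1}',b\1_{n-1}')'$, so $\langle y,q_k\rangle=a\langle\1_{n-1},u_k\rangle+b\langle\1_{n-1},v_k\rangle=0$ because both inner products vanish (as in Theorem \ref{eee}). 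This justifies writing
\[U^{\dagger}=\frac{2n-1}{n+4}\frac{yy'}{\|y\|^2}-\sum_{k=1}^{n-2}\frac{2}{\theta_k}\q_k\q_k^{\ast}.\]

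It then remains to match the three pieces of the claimed formula. Computing $\|y\|^2=\frac{(2n-1)(n+4)}{9(n-1)}$ turns the leading coefficient into $\frac{9(n-1)}{(n+4)^2}$, and a direct expansion of $yy'$ reproduces the block matrix in (\ref{AA}); this identifies the first term as $A$. For the sum I would single out the index $k=\delta:=\frac{n-1}{2}$, which is the genuinely new feature of the odd case: here $\phi_\delta=\cos(\frac{\pi}{2})=0$, so the generic description of $q_k$ via $Sv_k$ is unavailable and Lemma \ref{d1+}(iv) does not apply. Instead I would use the explicit vector $q_\delta=(0,u_\delta',\0')'$ with $u_\delta=(1,-1,\dots,1,-1)'$ from Theorem \ref{eee}, compute $\langle q_\delta,q_\delta\rangle=n-1$, and verify by inspection that $u_\delta u_\delta'$ equals the matrix $T$ with entries $(-1)^{r+s}$. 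Since $\theta_\delta=-2$, the coefficient is $-\frac{2}{\theta_\delta}=1$, so this single term collapses to exactly $H$ as defined in (\ref{h22}).

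The surviving indices are precisely $k\in\nabla$, producing the stated sum $-\sum_{k\in\nabla}\frac{2}{\theta_k}\frac{q_kq_k^{\ast}}{\langle q_k,q_k\rangle}$, and combining the three identifications yields $U^{\dagger}=A+H-\sum_{k\in\nabla}\frac{2}{\theta_k}\frac{q_kq_k^{\ast}}{\langle q_k,q_k\rangle}$. The main obstacle is entirely the $k=\delta$ term: one must confirm that the degenerate eigenvector (at $\phi_\delta=0$) is correctly normalized and that its rank-one contribution reduces to the sparse matrix $H$. Everything else runs parallel to the even case and amounts to bookkeeping of the block structure.
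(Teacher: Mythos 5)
Your proposal is correct and follows essentially the same route as the paper: write $U^{\dagger}$ via its spectral decomposition using the eigenpairs from Lemma \ref{d1+}(ii)--(iii), identify the $y$-term with $A$ after computing $\|y\|^2$, and peel off the exceptional index $k=\delta=\frac{n-1}{2}$ (where $\theta_\delta=-2$ and $q_\delta=(0,u_\delta',\0')'$) to recover $H$, leaving the sum over $\nabla$. The only difference is cosmetic: you make the orthogonality of $y$ with the $q_k$ and the rank count explicit, which the paper leaves implicit.
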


\begin{theorem}\label{lapevo}
$L_{odd}=   U^{\dagger}  $
\end{theorem}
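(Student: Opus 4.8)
The plan is to run the same assembly that proved Theorem \ref{lapev}, but carrying the two odd-case adjustments that have already been isolated in the preparatory lemmas: the extra block $H$ and the restriction of the circulant sum to the index set $\nabla$. Because all the genuine computation has been absorbed into Lemma \ref{specU2} and item (v) of Lemma \ref{d1+}, the theorem itself reduces to a short real-part bookkeeping argument.

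First I would invoke Lemma \ref{specU2} to write
\[U^{\dagger}=A+H-\sum_{k \in \nabla}\Bigl(\frac{2}{\theta_k}\Bigr)\frac{q_k q_k^{\ast}}{\langle q_k,q_k\rangle}.\]
Since $U=-\tfrac12 PEP$ is real and symmetric, $U^{\dagger}$ is real; and $A$, $H$ are real by their explicit definitions in (\ref{AA}) and (\ref{h22}). Hence I may replace the right-hand side by its real part, obtaining
\[U^{\dagger}=A+H+\sum_{k \in \nabla}\Bigl(-\frac{2}{\theta_k}\Bigr)\re\frac{q_k q_k^{\ast}}{\langle q_k,q_k\rangle}.\]
By the definition of $\Delta_k$ in item (v) of Lemma \ref{d1+}, each summand is exactly $\Delta_k$, so $U^{\dagger}=A+H+\sum_{k \in \nabla}\Delta_k$.

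Next I would apply the folding identity $\sum_{k \in \nabla}\Delta_k=\sum_{k=1}^{(n-3)/2}B_k$, which is again item (v) of Lemma \ref{d1+}; this is where the pairing $k \leftrightarrow n-1-k$ together with the equalities $C_{n-1-k}=C_k$, $\widetilde{C}_{n-1-k}=\widetilde{C}_k$ and $\phi_{n-1-k}^2=\phi_k^2$ do their work, exactly as in Lemma \ref{dbk}. Substituting yields $U^{\dagger}=A+H+\sum_{k=1}^{(n-3)/2}B_k$, which is precisely the definition of $L_{odd}$ in (\ref{speciallapo}), completing the argument.

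I do not expect a genuine obstacle here, since the substantive steps are the earlier lemmas. The one point that warrants care is confirming that the degenerate index $\delta=\tfrac{n-1}{2}$—the unique fixed point of the involution $k\mapsto n-1-k$ on $\{1,\dots,n-2\}$, which exists precisely because $n-1$ is even—has already been peeled off into the $H$ term in Lemma \ref{specU2}. This is exactly what guarantees that the residual sum over $\nabla$ pairs up cleanly into $(n-3)/2$ terms with no stray half-term, so that the folding produces $H+\sum_{k=1}^{(n-3)/2}B_k$ and the identification with $L_{odd}$ is exact.
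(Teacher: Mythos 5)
Your proposal is correct and follows the paper's proof essentially verbatim: invoke Lemma \ref{specU2}, pass to real parts using the realness of $U^{\dagger}$, $A$ and $H$, identify the summands with $\Delta_k$, and apply the folding identity of item (v) of Lemma \ref{d1+} to recover the definition of $L_{odd}$ in (\ref{speciallapo}). Your closing remark about the fixed point $\delta=\tfrac{n-1}{2}$ of the involution $k\mapsto n-1-k$ being absorbed into $H$ is a correct and useful sanity check, though the paper leaves it implicit at this stage.
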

\begin{proof}

By Lemma \ref{specU2}, we have
\begin{equation*}\label{r12}
 U^{\dagger}=A+H
- \sum_{k \in \nabla}   {(\frac{2}{\theta_{k}}) }   \frac{ {q_{k}}  {q_{k}} ^{\ast }   }{ \langle{ 
{{q_{k}}},{q_{k}}\rangle}}.
\end{equation*}
Since $U^{\dagger}$, $A$ and $H$ are real matrices,
 \begin{equation}\label{r22}
U^{\dagger}= A+H+\sum_{k \in \nabla} (-\frac{2}{\theta_{k}})\re\frac{{q_{k}}{q_{k}} ^{\ast}}{{{\langle {q_{k}}},{q_{k}}\rangle }}.
\end{equation}
Since $ \Delta_k={-(\frac{2}{\theta_{k}}} \frac{1}{\langle q_k,q_k \rangle})\re q_{k}q_{k}^{*}$,
 \begin{equation}\label{ll42}
U^{\dagger}=A+H+\sum_{k \in \nabla}  \Delta_k.
\end{equation}
By  item (v) of Lemma \ref{d1+},  
\[ \sum_{k \in \nabla} \Delta_k=\sum_{k=1}^{\frac{n-3}{2}}B_k.\]
Thus, by (\ref{ll42}), 
\[U^{\dagger}=A+H+\sum_{k=1}^{\frac{n-3}{2}}B_k. \]
In view of  (\ref{speciallapo}),
\[L_{odd} =A+H+\sum_{k=1}^{\frac{n-3}{2}}B_k.\]
Hence, \[U^{\dagger}=L_{odd}.\]
This completes the proof.
\end{proof}

\subsection{The vector $E^\dag\1$}
 \begin{lemma}\label{uvalue2}
\[E^{\dagger}\1=(\frac{-3n+13}{(n^2+3n-4)}, \frac{-n+6}{(n^2+3n-4)}\1_{n-1}',\frac{1}{n+4}\1_{n-1}' )'.\]
\end{lemma}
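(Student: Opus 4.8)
The plan is to mirror the even-case argument of Lemma \ref{uvalue} almost verbatim, since every ingredient it relied on has an exact odd-case counterpart. First I would show that $E^\dagger\1$ has the block-constant form $(c,a\1_{n-1}',b\1_{n-1}')'$ for some scalars $a,b,c$. By item (i) of Lemma \ref{d1+} we have $E^\dagger\1=\sum_{j=1}^{2}a_j\x_j/\la_j$, and each eigenvector $x_j=(\alpha_j,\beta_j\1_{n-1}',\1_{n-1}')'$ from (K2) is block-constant on the two outer blocks; hence so is any linear combination, which gives the claimed shape. Squaring then yields $\|E^\dagger\1\|^2=c^2+(n-1)a^2+(n-1)b^2$.

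Next I would compute $\tr E^\dagger$ in two ways. On one hand, from the spectrum in (K2), $\tr E^\dagger=\frac{1}{\la_1}+\frac{1}{\la_2}+\sum_{k=1}^{n-2}\frac{1}{\theta_k}$. On the other hand, combining (K4) with $\1'E^\dagger\1=\frac{2}{n-1}$ and $L_{odd}=U^\dagger$ (Theorem \ref{lapevo}) gives $E^\dagger=-\frac{1}{2}L_{odd}+\frac{n-1}{2}(E^\dagger\1)(E^\dagger\1)'$, so that $\tr E^\dagger=-\frac{1}{2}\tr L_{odd}+\frac{n-1}{2}\|E^\dagger\1\|^2$. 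Since $\rank(E)=n$ by (K1), we have $\rank(U)=n-1$, and the $n-1$ vectors $q_1,\dots,q_{n-2}$ and $y$ exhaust the nonzero spectrum of $U^\dagger$: its eigenvalues are $\frac{2n-1}{n+4}$ (from $y$, item (iii) of Lemma \ref{d1+}) together with $-\frac{2}{\theta_k}$ for every $k=1,\dots,n-2$ (item (ii)). The crucial point is that $\tr L_{odd}$ takes the \emph{same} form as in the even case: the extra summand $H$ in $L_{odd}$ is precisely the rank-one piece $-\frac{2}{\theta_\delta}\q_\delta\q_\delta^\ast$ with $\delta=\frac{n-1}{2}$ and $\theta_\delta=-2$, so it contributes the eigenvalue $1$ and is already counted in the full sum. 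Thus $\tr L_{odd}=\frac{2n-1}{n+4}+\sum_{k=1}^{n-2}(-\frac{2}{\theta_k})$, exactly as in the even derivation.

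Equating the two trace expressions makes the $\sum 1/\theta_k$ terms cancel, leaving $\frac{\la_1+\la_2}{\la_1\la_2}=-\frac{2n-1}{2(n+4)}+\frac{n-1}{2}(c^2+(n-1)a^2+(n-1)b^2)$. Using $\la_1+\la_2=2(3n-8)$ and $\la_1\la_2=-(n^2+3n-4)$ from (\ref{lambdaj}), this reduces to a single quadratic relation in $a,b,c$ identical to the even-case equation. Separately, $EE^\dagger\1=\1$ together with the row sums $S\1_{n-1}=(3n-7)\1_{n-1}$ and $T\1_{n-1}=4(n-3)\1_{n-1}$ (valid for odd $n$ just as for even $n$) produces the three linear equations
\begin{align*}
(n-1)(a+2b)&=1,\\
c+2a(n-2)+b(3n-7)&=1,\\
2c+a(3n-7)+4b(n-3)&=1.
\end{align*}
Finally I would solve this quadratic-plus-linear system, obtaining $c=\frac{-3n+13}{n^2+3n-4}$, $a=\frac{-n+6}{n^2+3n-4}$, $b=\frac{1}{n+4}$, which is the asserted value of $E^\dagger\1$.

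The only genuinely new point compared with the even case is the bookkeeping of the $k=\frac{n-1}{2}$ eigenvalue: one must check that the block $H$ contributes exactly $-2/\theta_\delta=1$ to $\tr L_{odd}$ and nothing more, so that the trace identity—and with it the entire linear and quadratic system—coincides with the even one. Once this is verified, the remaining steps are the same routine linear algebra, and the main obstacle is organizational rather than conceptual.
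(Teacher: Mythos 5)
Your proposal is correct and follows exactly the route the paper intends: the paper's own proof of this lemma is simply the remark that it is ``similar to Lemma \ref{uvalue}'', and what you have written is that even-case argument carried out in full for odd $n$. In particular, you correctly identify and verify the only point where the parity matters --- that the $k=\frac{n-1}{2}$ term contributes $-2/\theta_{\delta}=1$ to $\tr L_{odd}$ via the block $H$, so the trace identity and the resulting quadratic-plus-linear system for $a,b,c$ are unchanged from the even case.
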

\begin{proof}
The proof is similar to Lemma \ref{uvalue}.
\end{proof}

\subsection{Proof of main result}
We now prove our main result now.
\begin{theorem}
Let $n \geq 4$ be an even integer. Then,  
\[E^\dag = -\frac{1}{2}L_{odd}+\frac{n-1}{2}uu',\]
where 
\[u= \huge(\frac{-(3n-13)}{(n^2+3n-4)},~ \frac{-(n-6)}{(n^2+3n-4)}\1_{n-1}',~\frac{1}{(n+4)}\1_{n-1}' \huge)'.\]
\end{theorem}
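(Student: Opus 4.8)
The plan is to mirror the synthesis used in the even case, since every substantive computation has already been carried out for the odd setting in the preceding lemmas. The driving identity is (K4): because $E=D(G_n)$ is an EDM (by Theorem~13 of \cite{JAK}, recorded in (K2)) and $\1'E^\dag\1>0$, we may write
\[E^{\dagger}=-\tfrac{1}{2}U^{\dagger}+\frac{1}{\1'E^\dag \1}(E^\dag \1)(E^\dag \1)',\]
where $U:=-\tfrac{1}{2}PEP$ and $P$ is the orthogonal projection onto $\{\1\}^{\perp}$. The proof then consists of feeding three already-established facts into this formula.

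First I would invoke Theorem~\ref{lapevo} to replace $U^{\dag}$ by the special odd Laplacian $L_{odd}$; this is the step into which all the real difficulty of the odd case has been packed, namely the circulant eigen-analysis of Theorem~\ref{eee}, the real-part/symmetrization bookkeeping of Lemma~\ref{d1+}(v), and the extra rank-one block $H$ in (\ref{h22}) that accounts for the singular circulant index $k=\tfrac{n-1}{2}$ (where $\phi_k=0$). Next I would substitute $E^\dag\1=u$ from Lemma~\ref{uvalue2}; the vector there is exactly the $u$ in the statement. Finally, Lemma~\ref{d1+}(i) supplies the scalar $\1'E^\dag\1=\tfrac{2}{n-1}$, whence $\dfrac{1}{\1'E^\dag\1}=\dfrac{n-1}{2}$.

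Assembling these substitutions into (K4) gives
\[E^{\dagger}=-\tfrac{1}{2}L_{odd}+\tfrac{n-1}{2}\,uu',\]
which is the assertion. I do not expect any genuine obstacle at this final stage: the theorem is a one-line consolidation, and the hard work lives entirely in Theorem~\ref{lapevo} (the Laplacian identity) and Lemma~\ref{uvalue2} (the vector $E^\dag\1$). The only caveat worth flagging is that the hypothesis should read ``$n\ge 5$ odd'' rather than ``even'', consistent with Theorem~\ref{main2} and with the appearance of the index $k=\tfrac{n-1}{2}$ handled by $H$; under that reading the argument is formally identical to the even case.
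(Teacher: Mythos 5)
Your proposal is correct and is essentially identical to the paper's own proof: both apply the formula in (K4) and substitute $U^\dagger=L_{odd}$ from Theorem \ref{lapevo}, $E^\dagger\1=u$ from Lemma \ref{uvalue2}, and $\1'E^\dagger\1=\frac{2}{n-1}$ from Lemma \ref{d1+}. Your caveat about the hypothesis is also right --- the statement should read ``$n\ge 5$ odd,'' consistent with Theorem \ref{main2}.
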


\begin{proof}Since $E$ is an EDM and $\1'E^\dag\1>0$, by the result in (K4),
\[E^{\dagger}=-\frac{1}{2}U^{\dagger}+\frac{1}{\1'E^\dag \1}(E^\dag \1)(E^\dag \1)^{'}. \]
Using Theorem  \ref{lapevo}  and  Lemma \ref{uvalue2},
we get
$L_{odd}=U^\dag$ and $u=E^\dag \1$.
In view of Lemma \ref{d1+},  we have $\1'E^\dag \1=\frac{2}{n-1}$.
Hence,
\[E^{\dagger}=-\frac{1}{2}L_{odd}+\frac{n-1}{2}uu^{'}. \]
\end{proof}

\subsection{Properties of $L_{even}$ and $L_{odd}$}
The special Laplacian matrices satisfy the following.
\begin{theorem}
The matrices $L_{even}$ and $L_{odd}$ have the following properties:
\begin{enumerate}
\item[\rm (i)] $L_{even}$ and $L_{odd}$ 
are positive semidefinite.
\item[\rm (ii)] Each row sum of $L_{even}$ and $L_{odd}$ is $0$.
\item[\rm (iii)] $\rank(L_{even})=\rank(L_{odd})=n-1$.
\end{enumerate}
\end{theorem}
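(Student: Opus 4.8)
The plan is to deduce all three properties from the identities $L_{even}=U^{\dagger}$ (Theorem \ref{lapev}) and $L_{odd}=U^{\dagger}$ (Theorem \ref{lapevo}), where $U=-\frac{1}{2}PDP$ in the even case and $U=-\frac{1}{2}PEP$ in the odd case, and $P$ is the orthogonal projection onto $\1^{\perp}$. The arguments are word-for-word the same in both parities, so I would carry them out for $L_{even}$ and remark that replacing $D$ by $E$ yields the statement for $L_{odd}$. Throughout I would use that $U$ is real symmetric (being $-\frac{1}{2}PDP$ with $D$ and $P$ symmetric), so that $U^{\dagger}$ is real symmetric with the same eigenvectors as $U$ and reciprocal nonzero eigenvalues.

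For (i), I would first show that $U$ is positive semidefinite. For any $z\in\rr^{2n-1}$ one has $z'Uz=-\frac{1}{2}(Pz)'D(Pz)$, and since $Pz\in\1^{\perp}$, the EDM characterization recorded in (K2) gives $(Pz)'D(Pz)\le 0$; hence $z'Uz\ge 0$. Writing the spectral decomposition $U=\sum_i\mu_i w_iw_i'$ over the positive eigenvalues $\mu_i$, we obtain $U^{\dagger}=\sum_i\mu_i^{-1}w_iw_i'$, which is again positive semidefinite. Thus $L_{even}=U^{\dagger}$ is positive semidefinite.

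For (ii), the observation is that $P\1=\1-\frac{J}{2n-1}\1=\0$, so $U\1=-\frac{1}{2}PDP\1=\0$; that is, $\1\in\mbox{null}(U)$. Since $U$ is symmetric, $\mbox{null}(U^{\dagger})=\mbox{null}(U)$, and therefore $L_{even}\1=U^{\dagger}\1=\0$. This says precisely that every row sum of $L_{even}$ vanishes.

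For (iii), I would reuse the rank computation already carried out inside the proof of Lemma \ref{uvalue}. On one hand, the null-space of $PDP$ contains the basis $\{f^{1},\dotsc,f^{n-1}\}$ of $\mbox{null}(D)$ from (K1)---each $f^{i}$ lies in $\1^{\perp}$ by (\ref{1fi=0}), so $Pf^{i}=f^{i}$ and $PDPf^{i}=PDf^{i}=\0$---together with the vector $\1$, which is independent of the $f^{i}$ because $\langle\1,f^{i}\rangle=0$ while $\langle\1,\1\rangle\neq 0$; this exhibits $n$ independent null vectors, so $\rank(L_{even})=\rank(PDP)\le(2n-1)-n=n-1$. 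On the other hand, Lemma \ref{qk} and Lemma \ref{y} produce $n-1$ mutually orthogonal eigenvectors of $U^{\dagger}$ with nonzero eigenvalues, forcing $\rank(L_{even})\ge n-1$, and equality follows. The only step needing care---and the mild obstacle in the whole argument---is the appeal to the Moore-Penrose facts $\mbox{null}(U^{\dagger})=\mbox{null}(U)$ and $\rank(U^{\dagger})=\rank(U)$, valid for symmetric $U$; once these are in hand, each of (i)--(iii) collapses to a short computation with $P$ and the EDM inequality, and the odd case is identical with $D$ and $U=-\frac{1}{2}PDP$ replaced by $E$ and $U=-\frac{1}{2}PEP$, and Lemmas \ref{qk}, \ref{y}, \ref{uvalue} replaced by their odd analogues in Lemma \ref{d1+} and Lemma \ref{uvalue2}.
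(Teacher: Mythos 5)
Your proposal is correct and follows essentially the same route as the paper: positive semidefiniteness of $-PDP$ from the EDM inequality on $\1^{\perp}$, the row-sum property from $P\1=\0$, and the rank from the count of $n-1$ nonzero eigenvalues already established via Lemmas \ref{qk} and \ref{y} (and their odd analogues). You merely spell out a few steps the paper leaves implicit, such as $\mbox{null}(U^{\dagger})=\mbox{null}(U)$ for symmetric $U$.
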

\begin{proof}
By Theorem $\ref{lapev}$,
\[L_{even}=(-\frac{1}{2} PDP)^{\dag}. \]
As $D$ is a EDM, in view of (K2), $x'Dx \leq 0$ for all $x \in \1^\perp$. Since 
$P$ is the orthogonal projection onto $\1^{\perp}$,
$-PDP$ is positive semidefinite. This implies $L_{even}$ is positive semidefinite.
As $P \1=0$, $L_{even} \1=\0$. So, each row sum of
$L_{even}$ is $0$. Since $L_{even}$ has $n-1$ non-zero eigenvalues, 
$\rank(L_{even})=n-1$.

By a similar argument, we see that $L_{odd}$ satisfies (i), (ii) and (iii).
\end{proof}

R. Balaji and Vinayak Gupta \\
Department of Mathematics \\
Indian Institute of Technology -Madras \\
Chennai 600036 \\
India.
\end{document}